\newtheorem{prop}{Proposition}
\newtheorem{theorem}{Theorem}
\newtheorem{lemma}{Lemma}
\newtheorem{assumption}{Assumption}
\newtheorem{definition}{Definition}
\newcommand{\R}{\ensuremath{\mathbb{R}}}
\newcommand{\E}{\ensuremath{\mathbb{E}}}
\newcommand{\p}{\ensuremath{\mathbb{P}}}
\def\e{{\mathrm{e}}}
\def\diag{\mathop{\rm diag}}
\title{Multivariate self-exciting jump processes with applications to financial data}
\author{Heidar Eyjolfsson and Dag Tj\o stheim}
\begin{document}

\begin{abstract}
The paper discusses multivariate self- and cross-exciting processes. We define a class of multivariate point processes via their corresponding stochastic intensity processes that are driven by stochastic jumps. Essentially, there is a jump in an intensity process whenever the corresponding point process records an event. An attribute of our modelling class is that not only a jump is recorded at each instance, but also its magnitude. This allows large jumps to influence the intensity to a larger degree than smaller jumps.  We give conditions which guarantee that the process is stable, in the sense that it does not explode, and provide a detailed discussion on when the subclass of linear models is stable. Finally, we fit our model to financial time series data from the S\&P 500 and Nikkei 225 indices respectively. We conclude that a nonlinear variant from our modelling class fits the data best. This supports the observation that in times of crises (high intensity) jumps tend to arrive in clusters, whereas there are typically longer times between jumps when the markets are calmer. We moreover observe more variability in jump sizes when the intensity is high, than when it is low.
\end{abstract}

\maketitle

\section{Introduction}

This paper discusses a class of multivariate self-exciting and cross-exciting processes. A univariate self-exciting process can be viewed as a counting process which counts some number of events which have occurred at any given point in time. The standard Poisson process is an example of a counting process, with memoryless interarrival times, meaning that its interarrival times are exponentially distributed. The Poisson process is a suitable model in the case when the intensity of the counting process is constant, i.e. when the arrival of an event does not influence the arrival of subsequent events. This property of having identical and independently distributed interarrival times is however not always realistic. Indeed in many applications of counting processes the evidence suggests that the arrival times of events are far from being independent and identically distributed. As an example one can e.g. consider the arrivals of earthquakes and its aftershocks, the occurrence of individuals contracting flu in a given population, the occurrence of crises in financial markets. What these examples have in common is that events tend to arrive in clusters, as opposed to being independently scattered over time. This is an attribute of the class of self-exciting processes. Indeed, the class of self-exciting processes allow events to \emph{excite} the counting process intensity, which can in turn lead to this aforementioned clustering of events.

The univariate self-exciting model can be extended to a multivariate self- and cross-exciting process. Under this model events are classified as belonging to specific components, and excitation can occur both within a distinct component (self-excitation), but also between different components (cross-excitation). An example is the occurrence of shocks or crises across distinct financial markets. 

Recently, there has been a surge of interest in counting and point process methodology in time series modelling. In discrete time models the counts may be described by a Poisson-like structure, whose intensity varies in time according to a time series model, most often an autoregressive model. The resultant model gives a GARCH-like structure for an integer time series.  A relatively early case of this is the Poisson autoregression in  Fokianos et al. \cite{FRT09}.  More recent updates with many references are contained in Davis et al. \cite{DHLR15}, Fokianos et al. \cite{FSTD20}, and Debaly and Truquet \cite{DT21}.

Often events occur in continuous time at irregular time points. A class of point processes in continuous time is the Hawkes processes, originally introduced by Hawkes \cite{Ha71a, Ha71b}, but having experienced a strong recent revival in modeling of financial and other data. Some selected references are Embrechts et al. \cite{ELL11} , A\"{i}t Sahalia et al.  \cite{Ait10}, Bormetti et al., \cite{BCTCML15} and Swishchuk et al. \cite{S21}.

In this paper, extending results in Eyjolfsson and Tj{\o}stheim \cite{EyTj18}, we introduce a multivariate self-exciting and cross-exciting jump process. It is based on a continuous time point process model, where the intensity varies in time according to a stochastic differential equation (SDE), and the jump-size probability distribution may depend on the value of the intensity immediately before each jump. This generalizes (and makes more realistic) the aforementioned GARCH-type integer time series model. There  are also connections to the Hawkes processes and the conditional duration models of Engle and Russel \cite{ER98}.

We will mainly be concerned with two problems for the class of jump processes introduced. First, to be able to use the model, one has to secure that the model is stationary, that it does not explode. This problem 
has been given extensive coverage in the GARCH-driven integer time series; see in particular Fokianos et al \cite{FRT09,FSTD20} and Debaly and Truquet \cite{DT21a}. In the present paper we state and prove a general stability condition for our multivariate jump process using an extended infinitesimal generator of a Markov process combined with a continuous time Meyn and Tweedie \cite{MT3} theory. In its most general form the condition is valid for intensities generated by a nonlinear SDE, but as a corollary we specialize to linear SDEs, for which a more explicit condition can be stated. Second, we apply our model to the jump structure of a pair of financial indices represented by the S\&P 500 and the Nikkei 225 index. Estimated models (both linear and nonlinear) are obtained by maximum likelihood. We are able to detect both self-excitation and cross-excitation in the bivariate intensity process for this pair of indices, in particular how these excitations depend on size of jumps and how this is manifested in a lead lag relationship in jumps.

The time series we employ consists of daily quotes from the S\&P 500 and Nikkei 255 indices. We do however not fit the self-exciting model directly to the raw data. Instead, we extract the dates which have the largest returns in absolute value. Thus, in both markets, we extract a point process, which represents the time when the corresponding index experiences a large (positive or negative) movement; and we moreover extract sequences of magnitudes for the respective indices, that is the sizes of the returns. The extracted point processes thus represent the times when the indices experience a sharp increase or decline, respectively, whereas the magnitude sequences represent the size of the change. Note, that our model is special in that not only the point processes of jump-times are used for estimation, but also the magnitudes of the jumps. In fact we do highlight this point in our analysis, after fitting the models, we plot the intensity values versus the absolute values of the jump-sizes and discuss the resulting trends.

The paper is structured as follows. In section \ref{sec:SE} we formally define and discuss self- and cross-exciting processes. In particular, condititions that ensure stability are given. In section \ref{sec:Lin} we restrict our attention to linear intensity models, and discuss their stability in the context of matrices that define the corresponding intensities. In section \ref{sec:MLE} we fit linear and nonlinear versions of a bivarite intensity model to data extracted from daily quotes of the S\&P 500 index in New York and the Nikkei 255 index in Tokyo. Finally, in section \ref{sec:Conc} we give our concluding remarks.

\section{Self- and cross-exciting processes}\label{sec:SE}
A point process, $\{T_n\}_{n \geq 1}$, is a non-decreasing sequence of random variables with $T_0 = 0$.  For a general reference on point process we refer to the textbook by Daley and Vere-Jones \cite{DVJ88}. Given an integer $d \geq 1$, a $d$-dimensional point process, $\{(T_n,X_n)\}_{n \geq 1}$, is a double sequence such that $\{T_n\}_{n \geq 1}$ is a point process, and $\{X_n\}_{n \geq 1}$ is a sequence of random variables taking values in $\{1,\ldots,d\}$. Let $d \geq 1$ and 
define
\begin{equation}\label{def:N}
N_k(t) := 
\sum_{n \geq 1} 1_{\{X_n = k, T_n \leq t\}},
\end{equation}
for $k=1,\ldots,d$. If $N(t) := (N_1(t),\ldots,N_d(t))^\top$, then the vector $N(t)$ is the counting process associated to the multivariate point process, since it counts the number of incidents which have occured in each component up to time $t \geq 0$. We consider each component $k=1,\dots,d$ as a (univariate) point process and identify each component with its counting process and let
\begin{equation*}
\mathcal{F}_t^{N_k} := \sigma\{N_k(s) : 0 \leq s \leq t\},
\end{equation*}
where $t \geq 0$ and $k=1,\ldots,d$. Suppose that the point process $N_k(t)$ is adapted to a filtration $\{\mathcal{F}^k_t\}$, with $\mathcal{F}_t^{N_k} \subset \mathcal{F}^k_t$ and $1 \leq k \leq d$, and suppose that $N_k(t)$ admits a $(\p,\mathcal{F}^k_t)$-optional intensity $\lambda_k(t) \in \R_+^d$ (i.e. that $\lambda_k(t)$ is measurable with respect to the smallest $\sigma$-algebra on $\R_+ \times \Omega$ that make all c\`adl\`ag, adapted processes measurable) in the sense of Br\'emaud \cite{Br81}, that is, 
\begin{equation}\label{duality}
\E\left[\int_0^\infty f(s)dN_k(s) \right] = \E\left[ \int_0^\infty f(s)\lambda_k(s) ds \right],
\end{equation}
holds for all predictable $f: \Omega \times \R_+  \to [-\infty,\infty]$. In what follows, we define a multivariate self-exciting jump process to be a $d$-dimensional process $N(t) := (N_1(t),\ldots,N_d(t))^\top$, where each component is given by \eqref{def:N} and the vector of intensities $\lambda(t) = (\lambda_1(t),\ldots,\lambda_d(t))^\top$ is such that \eqref{duality} holds for each $k=1\ldots,d$, and $\lambda(t)$ is specified below. Let
\begin{equation}
U_k(t) = \sum_{n=1}^{N_k(t)} Y_n^k1_{\{X_n=k\}},
\end{equation}
for $k=1,\ldots,d$, where $\{Y_n^k\}$ is a family of non-negative random variables, and $X_n$ is as above. Suppose furthermore that $U(t) = (U_1(t),\ldots,U_d(t))^\top$, and consider the $d$-dimensional counting process \eqref{def:N}, with $d$-dimensional intensity process given by 
\begin{equation}\label{def:lambdaSDE}
d \lambda(t) = \mu(t,\lambda(t))dt + BdU(t),
\end{equation}
$\lambda(0) \geq 0$, where $\mu:\R^{d+1} \to \R^n$ is a measurable function, and $B \in \R^{d \times d}$ is a matrix. We furthermore denote by $\nu_1(\lambda,dx), \ldots, \nu_d(\lambda,dx)$ the jump-size distributions of the $d$ distinct components, i.e. $A \mapsto \nu_k(\lambda,A)$ is the probability distribution of the jump-sizes $Y_n^k$, for all $n \geq 1$, where $\lambda \in \R^d$ denotes the current value of the intensity process and $A \in \mathcal{B}(\R)$ is a Borel set. Thus, the jump part of the above intensity equation depends on the current value of the intensity process, which makes the equation \eqref{def:lambdaSDE} Markovian.  Here the drift function, $\mu$, determines the behaviour of the intensity process between jumps, and the matrix $B$, together with the jump-size distributions, characterizes the effects jumps have on the intensity in the sense that $b_{jk}$ describes the influence a jump in $N_k(t)$ has on the intensity component $\lambda_j(t)$. Thus, in particular $b_{jk} = 0$ means that the jump has no influence, and $b_{jk} = 0$ for all $j \ne k$, means that distinct components do not excite each other.

\begin{definition}
A multivariate SDE-driven self-exciting jump process is a multivariate process $N(t) = (N_1(t),\ldots,N_d(t))^\top$, such that for each $k=1,\ldots,d$, $N_k(t)$ admits the equation \eqref{def:N}, where $\{(T_n,X_n)\}$ is a $d$-dimensional point process, and the counting process $N_k(t)$ has a $(\p,\mathcal{F}_t^{N_k})$-optional intensity $\lambda_k(t)$, where $\lambda(t) = (\lambda_1(t),\ldots,\lambda_d(t))^\top$ is given by \eqref{def:lambdaSDE}, with jump-sizes, $\{Y_n^k\}$, which follow distributions $\nu_1(\lambda(T_n-),\cdot),\ldots,\nu_d(\lambda(T_n-),\cdot)$, respectively, where $\{\nu_k (\lambda,\cdot)\}_{\lambda > 0, k=1,\ldots,d}$ is a family of probability distributions.
\end{definition}
We stress that the notation $t-:= \lim_{s\uparrow t} s$ denotes the the left-limit of $t$, and thus the value of the intensity $\lambda(t)$ immediately before a jump is a parameter in the jump-size distribution. As an example of intensity models of the type \eqref{def:lambdaSDE}, we shall study the linear model (see equation \eqref{ex:linear}) in some detail in section \ref{sec:Lin}, and in section \ref{sec:MLE}, we fit both linear and non-linear models to data, and discuss the observed dependence between the intensity and jump-sizes.  

In what follows we derive the form of the extended generator of the intensity process \eqref{def:lambdaSDE}. Define $\mathcal{D}(\mathcal{A})$ to be the set of measurable functions $f: \R^d \to \R$ such that there exists a measurable function $\psi: \R^d \to \R$ and the process 
$$
C_f(t) = f(\lambda(t)) - f(\lambda(0)) - \int_0^t \psi(\lambda(s))ds
$$ 
is a local martingale with respect to the filtration generated by $\lambda(t)$, i.e. $\mathcal{F}_t = \sigma\{\lambda(s) : s \leq t\}$, under the probability measure $\p_\lambda$, $\lambda \in \R^d$, induced by the transition function of the Markov process $\lambda(t)$, with $\lambda(t)=\lambda$. Write $\psi = \mathcal{A} f$, and call $(\mathcal{A},\mathcal{D}(\mathcal{A}))$ the \emph{extended generator} of a Markov process $\lambda(t)$. For more details we refer to Davis \cite{Da93} and Eyjolfsson and Tj\o stheim \cite{EyTj18} for the univariate version of this process.
Let $e_1, \ldots, e_d$ be the standard orthonormal basis of $\R^d$, i.e. $e_k := (\delta_{1k},\ldots,\delta_{dk})^\top$is a vector of Kronecker delta's, in which the $k$th component is equal to $1$, and the remaining entries are equal to zero. Henceforth we moreover let $\langle x, y \rangle := x^\top y$ denote the dot product in $\R^d$. We show that our general process has the extended generator 
\begin{equation}
\label{def:gen}
(\mathcal{A}f)(\lambda) = \langle \mu(\cdot,\lambda), \nabla f(\lambda)\rangle + \langle \lambda, \mathcal{J}f(\lambda) \rangle,
\end{equation}
where $\mathcal{J}f(\lambda) = (\mathcal{J}_1 f(\lambda),\ldots, \mathcal{J}_d f(\lambda))^\top$, with
\begin{equation*}
\mathcal{J}_kf(\lambda) = \int (f(\lambda + b_k x) - f(\lambda))\nu_k(\lambda,dx),
\end{equation*}
and $b_k := B e_k$ ($b_k$ is the $k$th column vector of $B$), for $k=1,\ldots,d$. In the sequel we shall employ the notation $\E_\lambda = \E[\cdot |\lambda(0)=\lambda]$. The extended generator verifies the so called \emph{Dynkin formula}, which states that
$$
\E_\lambda[f(\lambda(t))] = f(\lambda) + \E_\lambda\left[\int_0^t \mathcal{A} f(\lambda(r))dr \right],
$$
for all $f \in \mathcal{D}(\mathcal{A})$.
\begin{prop}\label{prop:Dom}
If for any $t > 0$ and a given measurable function, $f:\R_+^d \to \R$, it holds that the map
$$
\lambda \mapsto \int f(\lambda + b_k x) \nu_k(\lambda,dx)
$$
is measurable for $k=1,\ldots,d$, and
\begin{equation}
\label{def:phiCond}
\E_\lambda \left[\int_0^t \sum_{k=1}^d \lambda_k(s) \int \{f(\lambda(s-)+b_k x) - f(\lambda(s-)\}\nu_k(\lambda(s-),dx) \,ds \right] < \infty
\end{equation}
then $f$ is in the domain of the extended generator of $\lambda(t)$, $f \in \mathcal{D}(\mathcal{A})$, where $\mathcal{A}$ is given by \eqref{def:gen}.
\end{prop}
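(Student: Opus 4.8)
The plan is to follow the classical route for piecewise-deterministic Markov processes, adapting the argument of Davis \cite{Da93} (and its univariate version in \cite{EyTj18}) to the present multivariate marked setting. First I would describe $\lambda(t)$ explicitly: it solves the ordinary differential equation $\dot\lambda(t)=\mu(t,\lambda(t))$ between successive event times $T_n$, and at each $T_n$ with $X_n=k$ it jumps by $b_k Y_n^k$, where $Y_n^k$ is drawn from $\nu_k(\lambda(T_n-),\cdot)$. To each component $k$ I attach the integer-valued random measure $p_k(ds,dx):=\sum_{n\ge1}1_{\{X_n=k\}}\delta_{(T_n,Y_n^k)}(ds,dx)$, and the key structural fact to establish is that its $(\p,\mathcal{F}_t)$-predictable compensator is $\widehat p_k(ds,dx)=\lambda_k(s)\,\nu_k(\lambda(s-),dx)\,ds$; this is the marked-point-process form of the duality identity \eqref{duality} that defines the optional intensity $\lambda_k$, together with the jump-size kernels specified in the Definition.

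Next I would apply the change-of-variables formula for processes of finite variation with jumps to $f(\lambda(t))$, taking $f$ sufficiently smooth (say $f\in C^1(\R_+^d)$, as the appearance of $\nabla f$ in \eqref{def:gen} tacitly requires). The absolutely continuous part contributes $\int_0^t\langle\mu(s,\lambda(s)),\nabla f(\lambda(s))\rangle\,ds$, while the jumps contribute $\int_{(0,t]}\!\int\sum_{k=1}^d\{f(\lambda(s-)+b_kx)-f(\lambda(s-))\}\,p_k(ds,dx)$. Writing $p_k=\widetilde p_k+\widehat p_k$ with $\widetilde p_k$ the compensated martingale measure, and using that $\lambda(s-)=\lambda(s)$ for Lebesgue-almost-every $s$, the $\widehat p_k$ terms sum to $\int_0^t\sum_k\lambda_k(s)\mathcal{J}_kf(\lambda(s))\,ds=\int_0^t\langle\lambda(s),\mathcal{J}f(\lambda(s))\rangle\,ds$, so that together with the drift term they give exactly $\int_0^t(\mathcal{A}f)(\lambda(s))\,ds$ with $\mathcal{A}$ as in \eqref{def:gen}. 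Consequently
\[
C_f(t)=\int_{(0,t]}\!\int\sum_{k=1}^d\{f(\lambda(s-)+b_kx)-f(\lambda(s-))\}\,\widetilde p_k(ds,dx)=:M_f(t).
\]

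It then remains to check that $\psi:=\mathcal{A}f$ is admissible and that $M_f$ is a local martingale. The first hypothesis --- measurability of $\lambda\mapsto\int f(\lambda+b_kx)\nu_k(\lambda,dx)$ --- makes $\lambda\mapsto\mathcal{J}_kf(\lambda)$, hence $\mathcal{A}f$, a measurable function, so $\psi$ is of the form required in the definition of $\mathcal{D}(\mathcal{A})$. For the martingale property I would invoke the standard integrability criterion for stochastic integrals against compensated random measures (Br\'emaud \cite{Br81}): condition \eqref{def:phiCond} says precisely that the integrand $\{f(\lambda(s-)+b_kx)-f(\lambda(s-))\}$ is integrable with respect to the compensator $\sum_k\widehat p_k$ over $[0,t]$, which makes $M_f$ a true martingale on $[0,t]$ and in particular a local martingale; if one prefers not to read \eqref{def:phiCond} as an absolute-integrability bound, one localizes along the event times $T_n\uparrow\infty$ (or along exit times of $\lambda$ from bounded sets), reducing to finitely many jumps on bounded trajectories where the bound is immediate. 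Since $t>0$ is arbitrary, $C_f=M_f$ is a local martingale under every $\p_\lambda$, so $f\in\mathcal{D}(\mathcal{A})$ with $\mathcal{A}f$ given by \eqref{def:gen}.

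The step I expect to be the main obstacle is the rigorous identification of the predictable compensator $\widehat p_k(ds,dx)=\lambda_k(s)\nu_k(\lambda(s-),dx)\,ds$, together with the careful handling of the left-limits $\lambda(s-)$ versus $\lambda(s)$ in the change-of-variables formula; once this is in place, collecting the terms into $\mathcal{A}f$ is routine and the (local) martingale property is essentially a restatement of assumption \eqref{def:phiCond}. A secondary point to flag is that \eqref{def:phiCond} is stated without absolute values, so one should either interpret it as an absolute-integrability condition or fall back on the localization just described, for which only a local version of the bound is needed.
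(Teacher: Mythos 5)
Your proposal is correct and follows essentially the same route as the paper: a change-of-variables (It\^o) formula for the jump process, compensation of the jump part via the intensity and the jump-size kernels, and the integrability hypothesis \eqref{def:phiCond} to secure the (local) martingale property. The only cosmetic differences are that you perform the compensation in a single step through the marked random measure $\widehat p_k(ds,dx)=\lambda_k(s)\,\nu_k(\lambda(s-),dx)\,ds$, whereas the paper first averages the mark against $\nu_k(\lambda(s-),dx)$ and then replaces $N_k(ds)$ by $\lambda_k(s)\,ds$ using the martingale $N(t)-\int_0^t\lambda(s)\,ds$; and your remark that \eqref{def:phiCond} should be read as an absolute-integrability condition (or handled by localization along the jump times) is a fair point that the paper's proof glosses over.
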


\begin{proof}
By It\^o's lemma it holds that
\begin{align*}
f(\lambda(t)) - f(\lambda(0)) &= \int_0^t \langle \mu(s,\lambda(s)), \nabla f(\lambda(s)) \rangle ds +  \sum_{0 < s \leq t} \left\{f(\lambda(s)) - f(\lambda(s-)) \right\}.
\end{align*}
Now, using that $N(t) - \int_0^t \lambda(s)ds$ is a martingale,

\begin{align*}
&\E_{\lambda}\left[\sum_{0 < s \leq t} \left\{f(\lambda(s)) - f(\lambda(s-)) \right\}\right] \\
&= \E_\lambda \left[\sum_{k=1}^d \int_0^t  \int \{f(\lambda(s-)+b_k x) - f(\lambda(s-)\}\nu_k(\lambda(s-),dx) N_k(ds) \right] \\
&= \E_\lambda \left[\sum_{k=1}^d \int_0^t  \lambda_k(s) \int \{f(\lambda(s-)+b_k x) - f(\lambda(s-)\}\nu_k(\lambda(s-),dx) \,ds \right],
\end{align*}
where we have exploited the fact that 
$$
s \mapsto \int \left\{f(\lambda(s-)+b_k x) - f(\lambda(s-)) \right\}\nu_k(\lambda(s-),dx),
$$
is predictable, and that a stochastic integral of a predictable process with respect to a martingale is a martingale. It follows that for a function $f$ which fulfils \eqref{def:phiCond}, the process 
$$
t \mapsto f(\lambda(t)) - f(\lambda(0)) - \int_0^t \mathcal{A}f(\lambda(s))ds
$$ 
is a zero-mean martingale, and thus the proof is completed using the definition of the extended generator.
\end{proof}

We remark that strictly speaking the time dependence of the drift function, $\mu$, means that we should consider $t \mapsto (t,\lambda(t))$ as a Markov process  and add $\partial/\partial t f(t,\lambda)$ to the extended generator. To simplify the notation we  circumvent this in our definition. We moreover note that the generator \eqref{def:gen} is a linear operator on its domain, $\mathcal{D}(\mathcal{A})$. 


Having identified the extended generator of our class of processes, we proceed to use it to analyse some of the class properties. To that end, we first of all notice that our class of self-exciting processes is a piecewise deterministic process (PDP) in the sense of Davis \cite{Da93}. We adopt the following regularity conditions.

\begin{assumption}
It holds that
\begin{enumerate}
\item[i)] $(t,\lambda) \mapsto \mu(t,\lambda)$ is Lipschitz continuous, and the solution of $y' = \mu(t,y)$, $y(0) = \lambda \geq \lambda_0$, does not explode in finite time.
\item[ii)] $\nu_k:[0,\infty) \to \mathcal{P}(\R)$ (the set of probability measures on $\R$) is a measurable function such that $\nu_k(\lambda,\{\lambda\}) = 0$ for all $\lambda \geq 0$ and $k=1,\ldots,d$. 
\item[iii)] The map $\lambda \mapsto \int_{\lambda_0}^\infty f(x) \nu_k(\lambda,dx)$ is continuous for continuous bounded $f$ and $k=1,\ldots,d$.
\end{enumerate}
\end{assumption}

Together with the non-explosion assumption, the first two of the above assumptions are the so-called ``standard conditions'' of Davis \cite{Da93}, which ensure a certain regularity structure on the class of PDP processes. The first one of these concerns the deterministic $\mu$ function, which governs the behaviour of the intensity function between jumps. By requiring Lipschitz continuity, and excluding explosions, we ensure that the process behaves like a deterministic non-explosive Markov process between jumps. The second condition states the measurability of the family of jumps-size distributions, and that we can almost surely detect jumps. The above assumption moreover ensures that our process class is a so-called Borel right process (see Theorem 27.8 in Davis \cite{Da93}). Finally, the continuity assumption of point three ensures together with the non-explosive property that our process class fulfills the Feller property, i.e. that the map $\lambda \mapsto P_t f(\lambda) := \E[f(\lambda(t))|\lambda(t)=\lambda]$ is bounded and continuous  if $f$ is bounded and continuous for $t \geq 0$.

The Markov process $\lambda(t)$ is said to be \emph{$\phi$-irreducible} if $\phi$ is $\sigma$-finite and 
$$
\E_\lambda\left[ \int_0^\infty 1_{\{ \lambda(t) \in A \}} dt \right] > 0
$$
whenever $\phi(A) > 0$, for all $\lambda \geq \lambda_0$. The following stability result employs the form of the generator to analyse the stability properties of the intensity process. It turns out that under certain assumptions on the generator the intensity process \eqref{def:lambdaSDE} is asymptotically stable. Given a signed measure on $\mathcal{B}(\R^d)$ and $f \geq 1$, write $\|\mu\|_f := \sup_{|g| \leq f} \left|\int g d\mu \right|$.
\begin{theorem}\label{thm:MT}
Suppose that $f$ is a norm on $\R^d$, or a monotone increasing and unbounded function of a norm on $\R^d$. If there exist constants $C_1 > 0$ and $C_0 \in \R$ such that
\begin{equation}\label{gen:cond}
\mathcal{A}f(\lambda) \leq C_0 - C_1 f(\lambda) 
\end{equation}
for all $\lambda$, then an essentially unique finite invariant measure, $\pi$, exists and $\lambda(t)$ is moreover geometrically ergodic, i.e. there exist $\beta < 1$, $K < \infty$ such that
$$
\|P_t(\lambda,\cdot) - \pi\|_f \leq Kf(\lambda)\beta^t,
$$
where $P_t(\lambda,\cdot) = \p_\lambda(\lambda(t) \in \cdot)$.
\end{theorem}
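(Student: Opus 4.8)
The plan is to deduce the statement from the Foster--Lyapunov criterion for geometric ergodicity of continuous-time Markov processes due to Meyn and Tweedie \cite{MT3}. Their criterion needs three ingredients: (a) the process is a non-explosive right Markov process which is $\phi$-irreducible (and aperiodic); (b) every compact subset of the state space is petite; and (c) a drift inequality $\mathcal{A}V \le -cV + d\,1_C$ holds for some $V:\R_+^d \to [1,\infty)$ in the domain of the extended generator, constants $c>0$, $d<\infty$, and a petite set $C$. Granted these, the process admits an essentially unique finite invariant measure $\pi$ and is geometrically ergodic in $\|\cdot\|_V$. The work is therefore to verify (a)--(c) for the intensity process \eqref{def:lambdaSDE} and to massage the hypothesis \eqref{gen:cond} into the form (c).

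First I would reduce \eqref{gen:cond} to the Meyn--Tweedie drift inequality. Set $V := 1 + f$; since $f$ is continuous and either a norm or a monotone unbounded function of a norm, we have $V \ge 1$ and every sublevel set $\{\lambda : f(\lambda) \le R\}$ is closed and bounded, hence compact. Because $\mathcal{A}$ is linear with $\mathcal{A}1 = 0$, one has $\mathcal{A}V = \mathcal{A}f$, and writing $f = V - 1$ turns \eqref{gen:cond} into $\mathcal{A}V(\lambda) \le (C_0+C_1) - C_1 V(\lambda)$. Choosing $R$ with $1+R \ge 2(C_0+C_1)/C_1$ and putting $C := \{\lambda \ge \lambda_0 : f(\lambda) \le R\}$, one checks immediately that $\mathcal{A}V \le -(C_1/2)V$ on $C^c$ and $\mathcal{A}V \le d$ on $C$ with $d := \max(C_0+C_1,0)$, i.e. $\mathcal{A}V \le -(C_1/2)V + d\,1_C$ with $C$ compact. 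Here I would also record that $V \in \mathcal{D}(\mathcal{A})$ is tacitly being assumed, so that Dynkin's formula and Proposition \ref{prop:Dom} apply; this is the point at which condition \eqref{def:phiCond} must be checked.

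Next I would establish the structural conditions (a) and (b). Non-explosion and the (Borel) right-process property are exactly what the Assumption secures (the standard conditions of Davis \cite{Da93}, Theorem~27.8), and the Feller property was already noted. For $\phi$-irreducibility one argues from the jump mechanism: starting from any $\lambda \ge \lambda_0$, the deterministic flow of $y' = \mu(t,y)$ interspersed with jumps drawn from the $\nu_k(\lambda,\cdot)$ moves the process around, and --- provided the jump-size laws are sufficiently spread out (e.g. have nontrivial absolutely continuous components) together with controllability of the flow --- the occupation measure dominates a nontrivial $\sigma$-finite measure $\phi$ supported on the reachable region. Since the process is Feller and $\phi$-irreducible with $\operatorname{supp}\phi$ of nonempty interior, it is a $T$-process, whence every compact set is petite \cite{MT3}; in particular the set $C$ above is petite. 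Aperiodicity of the continuous-time chain is automatic once $\phi$-irreducibility holds (pass, if needed, to a resolvent or skeleton chain).

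Combining (a), (b) and the drift inequality, the Meyn--Tweedie theorem delivers the invariant measure $\pi$ together with constants $B<\infty$, $\rho<1$ such that $\|P_t(\lambda,\cdot) - \pi\|_V \le B\,V(\lambda)\,\rho^t$; since $V = 1+f$ is equivalent to $f$ (replacing $f$ by $f\vee 1$ where the definition of $\|\cdot\|_f$ requires it), this is the claimed estimate with $K$ and $\beta$ adjusted. I expect the genuine obstacle to lie in step (a)--(b): the drift condition \eqref{gen:cond} says nothing about the process spreading mass, so verifying $\phi$-irreducibility and petiteness of compact sets forces one to exploit regularity of the families $\{\nu_k(\lambda,\cdot)\}$ and the dynamics of the deterministic flow; once these are in hand, the rest is bookkeeping around Davis's PDMP framework and the Meyn--Tweedie drift theorem.
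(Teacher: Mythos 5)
Your proposal is correct and follows essentially the same route as the paper: $\phi$-irreducibility together with the Feller property yields petiteness of compact sets (via Meyn and Tweedie), and the drift condition \eqref{gen:cond} then gives the invariant measure and geometric ergodicity by Theorem 6.1 of Meyn and Tweedie. The paper's proof is simply a terser version of yours; the points you flag as needing care (membership of $f$ in $\mathcal{D}(\mathcal{A})$ and the verification of $\phi$-irreducibility from the jump mechanism) are precisely the details the paper asserts without elaboration.
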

\begin{proof}
Let $\phi$ be a finite measure on $\R^d$ which is supported on $[\lambda_0^1,\infty) \times \cdots \times [\lambda_0^d,\infty)$, and absolutely continuous with respect to the Lebesgue measure on $\R^d$. Then $\lambda(t)$ is $\phi$-irreducible (this also holds for a sampled chain). It follows by the Feller property and Theorem 3.4 in Meyn and Tweedie  \cite{MT1} that all compact subsets of a skeleton chain are petite. So the result follows directly from Theorem 6.1 in Meyn and Tweedie \cite{MT3}.
\end{proof}

\section{The linear drift model}\label{sec:Lin}

In this section we consider the linear case, i.e. the case when $\mu(\lambda) = A( \lambda - \lambda_0)$, for a vector $\lambda(0) = \lambda_0 = (\lambda_0^1,\ldots,\lambda_0^d)^\top \geq 0$, and a matrix $A$. In this case we may write the intensity dynamics as 
\begin{equation}\label{ex:linear}
d\lambda(t) = A(\lambda(t) - \lambda_0)dt + BdU(t).
\end{equation}
This linear intensity is a continuous time analogue to the discrete time autoregressive model discussed by Fokianos et al. \cite{FRT09}.
\begin{prop}
If $A$ in the linear model \eqref{ex:linear} is diagonalisable, then 
$$
\lambda(t) = \lambda_0 + \int_0^t \e^{A(t-s)} B dU(s).
$$
\end{prop}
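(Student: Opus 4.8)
The plan is to derive the formula by the classical variation-of-constants (integrating-factor) method, the only point requiring care being that the forcing term $U$ is a pure-jump process. First I would set $\eta(t) := \lambda(t) - \lambda_0$, so that by \eqref{ex:linear} one has $d\eta(t) = A\eta(t)\,dt + B\,dU(t)$ with $\eta(0) = 0$. Since $A$ is diagonalisable we may write $A = PDP^{-1}$ with $D$ diagonal, whence $\e^{\pm At} = P\e^{\pm Dt}P^{-1}$ is an explicit, smooth, deterministic matrix-valued function of $t$ satisfying $\tfrac{d}{dt}\e^{\pm At} = \pm A\e^{\pm At} = \pm\e^{\pm At}A$ (each matrix commutes with its own exponential); this explicitness is essentially the only use made of the diagonalisability hypothesis.

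Next I would apply the product rule to the process $t \mapsto \e^{-At}\eta(t)$. Because $\e^{-At}$ is continuous and of finite variation there is no quadratic-covariation correction, and the $dt$-parts cancel thanks to the commutation $A\e^{-At} = \e^{-At}A$:
\[
d\!\left(\e^{-At}\eta(t)\right) = -A\e^{-At}\eta(t)\,dt + \e^{-At}\,d\eta(t) = \left(-A\e^{-At} + \e^{-At}A\right)\eta(t)\,dt + \e^{-At}B\,dU(t) = \e^{-At}B\,dU(t).
\]
Integrating from $0$ to $t$ and using $\eta(0)=0$ gives $\e^{-At}\eta(t) = \int_0^t \e^{-As}B\,dU(s)$; multiplying on the left by $\e^{At}$ and pulling the deterministic matrix inside the (pathwise Stieltjes) integral yields $\eta(t) = \int_0^t \e^{A(t-s)}B\,dU(s)$, that is, $\lambda(t) = \lambda_0 + \int_0^t \e^{A(t-s)}B\,dU(s)$.

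To make this manipulation rigorous within the PDP framework it suffices to recall that, up to the explosion time of $N(t)$ (which is $+\infty$ under the conditions of Theorem \ref{thm:MT}), the counting process has only finitely many events on each bounded interval, so $U$ is a step process: purely discontinuous and of finite variation, with $\Delta U(T_n) = Y_n^{X_n} e_{X_n}$ and $B\,\Delta U(T_n) = Y_n^{X_n} b_{X_n}$. On the deterministic stretches between consecutive jump times $\lambda$ solves the linear ODE $\lambda'(t) = A(\lambda(t)-\lambda_0)$, so there the product rule reduces to the elementary one, while at each $T_n$ both sides of the displayed identity jump by $\e^{A(t-T_n)}b_{X_n}Y_n^{X_n}$; superposing the continuous pieces and the jumps reproduces the claimed expression. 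Equivalently, since the linear drift $\lambda \mapsto A(\lambda-\lambda_0)$ is globally Lipschitz and the associated ODE is non-explosive (so the standing regularity conditions on $\mu$ hold), \eqref{ex:linear} admits a unique solution and it is enough to have exhibited one. I expect the only mild obstacle to be the bookkeeping of left limits and the verification that no stochastic-calculus correction terms arise, which is precisely where the finite-variation, purely-discontinuous nature of $U$ is used.
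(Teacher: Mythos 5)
Your proof is correct and rests on the same variation-of-constants idea as the paper's; the only difference is that you apply the matrix integrating factor $\e^{-At}$ directly, whereas the paper first changes coordinates via $A = EDE^{-1}$, runs the scalar integrating-factor computation componentwise with It\^o's lemma, and then transforms back. Your justification that the product rule needs no quadratic-covariation correction (since $\e^{-At}$ is deterministic and continuous while $U$ is a finite-variation step process) is exactly the content of the paper's componentwise It\^o step, and your remark that diagonalisability is used only to make $\e^{At}$ explicit is accurate.
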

\begin{proof}
Suppose $A = EDE^{-1}$, where $E$ is a matrix consisting of the eigenvectors of $A$ and $D = \diag(\alpha_1,\ldots,\alpha_n)$ is a diagonal matrix with the eigenvalues of $A$ on the diagonal, then letting $X(t) := E^{-1}(\lambda(t) - \lambda_0)$ and $Z(t) := E^{-1}BU(t)$, it follows that 
$$
dX(t) = DX(t)dt + dZ(t), 
$$
where $X(0) = 0$. Now for $k=1,\ldots,n$ it follows by It\^o's lemma that 
$$
X_k(t)\e^{-\alpha_k t} = -\int_0^t \alpha_k X_k(s)\e^{-\alpha_k s} ds + \int_0^t \e^{-\alpha_k s} dX_k(s) = \int_0^t \e^{-\alpha_k s} dZ_k(s),
$$
from which it follows that 
$$
X(t) = \int_0^t \e^{D(t-s)} dZ(s),
$$
and thus since $\e^{A(t-s)} = E\e^{D(t-s)}E^{-1}$ it holds that 
$$
\lambda(t) = \lambda_0 + \int_0^t \e^{A(t-s)} B dU(s).
$$
\end{proof}
According to the this Proposition, if $t_1 > t_0$ we may write
\begin{align*}
\lambda(t_1) - \lambda_0 &= \e^{A(t_1-t_0)}(\lambda(t_0) - \lambda_0) + \int_{t_0}^{t_1}\e^{A(t_1 - s)}BdU(s).
\end{align*} 
So, for a discrete time grid $t_0 < t_1 < \cdots < t_n$, one can view $\lambda(t_k) - \lambda_0$ as a self-exciting autoregressive time series. We furthermore remark that in this case, the intensity process is consistent with the definition of a multivariate Hawkes process with an exponential kernel and stochastic jump sizes.


\begin{lemma}\label{lem:LinGen}
	Suppose that $f(\lambda) = \lambda^\alpha = \lambda_1^{\alpha_1}\cdots \lambda_d^{\alpha_d}$, where $\alpha = (\alpha_1,\ldots,\alpha_d)$ is a $d$-dimensional vector of non-negative integers such that $\alpha_1 + \cdots + \alpha_d = n$. Then if $J(\lambda)$ is the $d$-dimensional diagonal matrix with $\int x \nu_k(\lambda,dx)$, $k=1,\ldots,d$, on the diagonal, then under the linear model \eqref{ex:linear} it holds that
	\begin{align*}
		\mathcal{A}f(\lambda) &= (\nabla f(\lambda))^\top M(\lambda)\lambda  +  \sum_{m=2}^n \sum_{|\alpha| = m} \frac{D^\alpha f(\lambda)}{\alpha!} (\xi^\alpha(\lambda))^\top \lambda - (\nabla f(\lambda))^\top A\lambda_0 ,
	\end{align*}
	where  $M(\lambda) = A + BJ(\lambda)$, and $\xi^\alpha(\lambda) = (\xi_1^{\alpha} (\lambda),\ldots,\xi_d^{\alpha}(\lambda))$, with $\xi_k^\alpha(\lambda) = \int (b_k x)^{\alpha} \nu_k(\lambda,dx)$ for $k=1,\ldots,d$ and any $d$-dimensional multi-index $\alpha$, and the sum from $m=2$ to $n$ in the final line is dropped in the case when $n<2$. 
\end{lemma}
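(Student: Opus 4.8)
The plan is to substitute the linear drift $\mu(\lambda) = A(\lambda - \lambda_0)$ and the monomial $f(\lambda) = \lambda^\alpha$ into the generator formula \eqref{def:gen} and then expand the jump part by a finite Taylor expansion. Writing out \eqref{def:gen} for the linear model \eqref{ex:linear},
\[
\mathcal{A}f(\lambda) = \langle A(\lambda-\lambda_0), \nabla f(\lambda)\rangle + \sum_{k=1}^d \lambda_k \int ( f(\lambda+b_k x) - f(\lambda) )\,\nu_k(\lambda,dx),
\]
where $b_k = Be_k$. The drift term splits immediately as $\langle A(\lambda-\lambda_0),\nabla f(\lambda)\rangle = (\nabla f(\lambda))^\top A\lambda - (\nabla f(\lambda))^\top A\lambda_0$, the second summand being exactly the last term of the claimed identity. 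So all the work is in the jump part.

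First I would use that $f$ is a polynomial of total degree $n$, so that its multivariate Taylor expansion about $\lambda$ terminates and is exact:
\[
f(\lambda+b_k x) - f(\lambda) = \sum_{1\le |\beta|\le n} \frac{D^\beta f(\lambda)}{\beta!}\,(b_k x)^\beta,
\]
with $(b_k x)^\beta = x^{|\beta|}\prod_{j=1}^d b_{jk}^{\beta_j}$ (here I write $\beta$ for the summation multi-index to avoid clashing with the exponent $\alpha$ of $f$). The $|\beta| = 1$ contribution is $\sum_{j=1}^d \partial_j f(\lambda)\,b_{jk}\,x = x\,(\nabla f(\lambda))^\top b_k$, which upon integration against $\nu_k(\lambda,dx)$ yields $(\nabla f(\lambda))^\top b_k\, J_{kk}(\lambda)$ since $J_{kk}(\lambda) = \int x\,\nu_k(\lambda,dx)$. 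Integrating the remaining terms produces $\sum_{2\le |\beta|\le n} \frac{D^\beta f(\lambda)}{\beta!}\,\xi_k^\beta(\lambda)$, where $\xi_k^\beta(\lambda) = \int (b_k x)^\beta\,\nu_k(\lambda,dx)$ — precisely the quantities named in the statement.

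The only step that needs a little care is reassembling the sum over $k$. Multiplying the $|\beta|=1$ part by $\lambda_k$ and summing, $\sum_{k=1}^d \lambda_k\, J_{kk}(\lambda)\, b_k = B\sum_{k=1}^d J_{kk}(\lambda)\lambda_k\, e_k = B J(\lambda)\lambda$, using that $J(\lambda)$ is diagonal so that the $k$th entry of $J(\lambda)\lambda$ is $J_{kk}(\lambda)\lambda_k$; hence this piece contributes $(\nabla f(\lambda))^\top B J(\lambda)\lambda$, which combines with the drift contribution $(\nabla f(\lambda))^\top A\lambda$ into $(\nabla f(\lambda))^\top M(\lambda)\lambda$ for $M(\lambda) = A + B J(\lambda)$. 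For the higher-order terms, $\sum_{k=1}^d \lambda_k\,\xi_k^\beta(\lambda) = (\xi^\beta(\lambda))^\top\lambda$ by definition of $\xi^\beta(\lambda)$, and interchanging the two finite sums gives $\sum_{m=2}^n\sum_{|\beta|=m}\frac{D^\beta f(\lambda)}{\beta!}(\xi^\beta(\lambda))^\top\lambda$; when $n<2$ this is empty since $D^\beta f\equiv 0$ for $|\beta|\ge 2$. Collecting the three pieces gives the asserted formula. The only non-algebraic point is that $\mathcal{A}$ should be the genuine extended generator, which is guaranteed by Proposition \ref{prop:Dom} once the $\nu_k(\lambda,\cdot)$ have finite $n$th moments (locally in $\lambda$); beyond that mild hypothesis, the ``hard part'' is merely keeping the multi-index bookkeeping and the column-of-$B$/diagonal-matrix identifications straight.
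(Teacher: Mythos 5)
Your proposal is correct and follows essentially the same route as the paper's proof: an exact finite Taylor expansion of the monomial $f$ about $\lambda$, integration against $\nu_k(\lambda,dx)$, and absorption of the first-order jump term into the drift to form $M(\lambda)=A+BJ(\lambda)$. Your added remark that one should verify $f\in\mathcal{D}(\mathcal{A})$ via finite moments of the $\nu_k$ is a point the paper leaves implicit, but otherwise the two arguments coincide.
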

\begin{proof}
	An application of Taylor's theorem yields: 
	\begin{align*}
		f(\lambda+b_k x) - f(\lambda) &= \sum_{0 < |\alpha| \leq n} \frac{D^\alpha f(\lambda)}{\alpha!}(b_k x)^\alpha = \sum_{m=1}^n \sum_{|\alpha| = m} \frac{D^\alpha f(\lambda)}{\alpha!}(b_k x)^\alpha.
	\end{align*}
	Hence, 
	\begin{align*}
		\mathcal{J}_k f(\lambda) &= \sum_{m=1}^n \sum_{|\alpha| = m} \frac{D^\alpha f(\lambda)}{\alpha!} \int (b_k x)^\alpha \nu_k(\lambda,dx)
	\end{align*}
	and
	\begin{align*}
		\lambda^\top \mathcal{J}f(\lambda) &=  \sum_{m=1}^n \sum_{|\alpha| = m} \frac{D^\alpha f(\lambda)}{\alpha!} \lambda^\top \xi^\alpha(\lambda) 
	\end{align*}
	The generator thus takes the form 
	\begin{align*}
		\mathcal{A}f(\lambda) &= (\nabla f(\lambda))^\top (A(\lambda-\lambda_0))  + \lambda^\top \mathcal{J}f (\lambda) \\
		&= (\nabla f(\lambda))^\top (A + B J(\lambda))\lambda  \\  
		&\ +  \sum_{m=2}^n \sum_{|\alpha| = m} \frac{D^\alpha f(\lambda)}{\alpha!} (\xi^\alpha(\lambda))^\top \lambda - (\nabla f(\lambda))^\top A\lambda_0.
	\end{align*}	
\end{proof}

\begin{lemma}\label{lem:negMat}
Suppose that $M(\lambda)$ is a matrix which depends on a parameter $\lambda \in \R^d$, and that for each $\lambda \in \R^d$ $M(\lambda) + M(\lambda)^\top$ is diagonalisable, with negative eigenvalues.
\begin{itemize}
	\item If $\sup_\lambda \gamma_1(\lambda) < 0$, where $\gamma_1(\lambda)$ denotes the largest negative eigenvalue of $M(\lambda) + M(\lambda)^\top$, then  $\lambda M(\lambda)\lambda^\top \leq -C_1\|\lambda\|_2^2$ holds for all $\lambda \in \R^d$, where $\|\lambda\|_2^2 = \lambda^\top\lambda$, and $C_1 = -\sup_\lambda \gamma_1(\lambda) > 0$.
	\item If $\inf_\lambda \gamma_0(\lambda) < 0$, where $\gamma_0(\lambda)$ denotes the smallest negative eigenvalue of $M(\lambda) + M(\lambda)^\top$, then  $\lambda M(\lambda)\lambda^\top \geq -C_0\|\lambda\|_2^2$ holds for all $\lambda \in \R^d$, where $\|\lambda\|_2^2 = \lambda^\top\lambda$, and $C_0 = -\inf_\lambda \gamma_0(\lambda) > 0$.
\end{itemize}
\end{lemma}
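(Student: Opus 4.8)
The plan is to reduce both inequalities to the Rayleigh--Ritz characterisation of the eigenvalues of the symmetric matrix $S(\lambda) := M(\lambda) + M(\lambda)^\top$. The starting point is the elementary observation that, since the scalar $\lambda M(\lambda)\lambda^\top$ equals its own transpose,
\begin{equation*}
\lambda M(\lambda)\lambda^\top = \tfrac12\big(\lambda M(\lambda)\lambda^\top + \lambda M(\lambda)^\top\lambda^\top\big) = \tfrac12\,\lambda S(\lambda)\lambda^\top ,
\end{equation*}
so that everything reduces to controlling the quadratic form of the \emph{symmetric} matrix $S(\lambda)$.

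Next I would apply the spectral theorem pointwise in $\lambda$: $S(\lambda)$ is real symmetric, hence orthogonally diagonalisable with real eigenvalues, and by hypothesis these are all negative, so $\gamma_1(\lambda)$ is its largest eigenvalue and $\gamma_0(\lambda)$ its smallest. The Rayleigh quotient bounds then give, for every fixed $\lambda \in \R^d$,
\begin{equation*}
\gamma_0(\lambda)\,\|\lambda\|_2^2 \;\leq\; \lambda S(\lambda)\lambda^\top \;\leq\; \gamma_1(\lambda)\,\|\lambda\|_2^2 ,
\end{equation*}
and combining with the identity above yields $\tfrac12\gamma_0(\lambda)\|\lambda\|_2^2 \leq \lambda M(\lambda)\lambda^\top \leq \tfrac12\gamma_1(\lambda)\|\lambda\|_2^2$.

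Finally I would take a supremum, resp.\ infimum, over $\lambda$. Under $\sup_\lambda\gamma_1(\lambda) < 0$ one has $\gamma_1(\lambda) \leq \sup_\mu\gamma_1(\mu) =: -C_1 < 0$ for all $\lambda$, hence $\lambda M(\lambda)\lambda^\top \leq -\tfrac12 C_1\|\lambda\|_2^2$ for all $\lambda$; symmetrically, $\inf_\lambda\gamma_0(\lambda) < 0$ (the content being that this infimum is finite) gives $\lambda M(\lambda)\lambda^\top \geq -\tfrac12 C_0\|\lambda\|_2^2$ with $C_0 := -\inf_\mu\gamma_0(\mu) > 0$. The harmless factor $\tfrac12$ produced by the symmetrisation can either be carried along or absorbed into the constants, according to how $C_0,C_1$ are then used in Theorem \ref{thm:MT}.

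There is no genuinely hard step here: the argument is a one-line symmetrisation, the spectral theorem, and a supremum/infimum. The only things requiring care are the orientation bookkeeping — making sure that, because all eigenvalues are negative, the largest one $\gamma_1$ pairs with the upper Rayleigh bound and the smallest one $\gamma_0$ with the lower one — and keeping track of the factor $\tfrac12$ so that the constants are consistent with their later use. One may also remark that the diagonalisability assumption on $S(\lambda)$ is automatic, the real content of the hypothesis being negative definiteness uniformly bounded away from $0$ in the first case, and uniform boundedness below in the second.
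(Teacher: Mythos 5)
Your proof is correct and follows essentially the same route as the paper's: reduce to the symmetric part $\tfrac12(M(\lambda)+M(\lambda)^\top)$, apply the spectral theorem (the paper does this by an explicit orthogonal change of basis, you by quoting the Rayleigh--Ritz bounds, which is the same computation), and then take the supremum or infimum over $\lambda$. If anything you are more careful than the paper about the factor $\tfrac12$ from symmetrisation --- the paper's proof silently switches $\gamma_1(\lambda)$ from an eigenvalue of $M+M^\top$ (as in the statement) to an eigenvalue of $\tfrac12(M+M^\top)$, a harmless discrepancy since only the existence of some positive constant matters downstream.
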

\begin{proof}
If $M_1(\lambda) = \frac 1 2 (M(\lambda) + M(\lambda)^\top)$ is the symmetric part of $M(\lambda)$ then $M_1(\lambda)$ can be diagonalised as $M_1(\lambda) = E(\lambda)D(\lambda)E(\lambda)^\top$, where $E(\lambda)$ is an orthogonal matrix consisting of the eigenvectors of $M_1(\lambda)$, and $D(\lambda)$ is a diagonal matrix of the corresponding eigenvalues. The column vectors of $E(\lambda)$ moreover form a basis for $\R^d$, so one can write $\lambda = E(\lambda)x$ for some $x \in \R^d$. Thus, if $\gamma_1(\lambda)$, denotes the largest negative eigenvalue of $M_1(\lambda)$, it follows that 
\begin{align*}
	\lambda^\top M(\lambda) \lambda &= \lambda^\top M_1(\lambda) \lambda = x^\top D(\lambda)x \leq \gamma_1(\lambda) \|x\|_2^2 \leq -C_1 \|\lambda \|_2^2,
\end{align*}
where we have employed the orthogonality of $E(\lambda)$, which implies that 
$$
\|x\|_2^2 = x^\top x = (E(\lambda)^\top \lambda)^\top(E(\lambda)^\top\lambda) = \lambda^\top \lambda = \|\lambda\|_2^2.
$$
The other inequality of the lemma is proved analogously.
\end{proof}

\begin{assumption}\label{Ass:Lin}
Suppose $M(\lambda) = A + BJ(\lambda)$ is a diagonalisable $d$-dimensional square matrix where $A$ and $B$ are as in \eqref{ex:linear} and $J(\lambda)$ is a $d$-dimensional diagonal matrix with 
$$
\int x \nu_k(\lambda,dx),
$$
$k=1,\ldots,d$, $\lambda \in \R^d, \lambda \geq \lambda_0$, on the diagonal. Suppose furthermore that the moments
$$
\int x \nu_k(\lambda,dx) \ \ \text{ and }\ \ \int x^2 \nu_k(\lambda,dx)
$$
are bounded as a functions of $\lambda$, all the eigenvalues of $M(\lambda)+M(\lambda)^\top$ are negative and $\sup_{\lambda \geq \lambda_0} \gamma(\lambda) < 0$, where $\gamma(\lambda)$ denotes the largest (negative) eigenvalue of the matrix $M(\lambda)+M(\lambda)^\top$.
\end{assumption}
The following proposition gives conditions under which the linear model is stable and geometrically ergodic. 
\begin{prop}\label{prop:stable}
Under Assumption~\ref{Ass:Lin}, $\lambda(t)$ is stable in the sense of Theorem \ref{thm:MT} with $f(\lambda) = \lambda^\top \lambda$, i.e. \eqref{gen:cond} holds under the linear model.
\end{prop}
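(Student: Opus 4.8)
The proposition asks only for the differential inequality \eqref{gen:cond}, so the plan is to compute $\mathcal{A}f$ explicitly for $f(\lambda)=\lambda^\top\lambda$ and then bound it from above. First I would apply Lemma~\ref{lem:LinGen} with $n=2$ (the formula there applies to any polynomial of degree at most $2$ by linearity of $\mathcal{A}$): since $f$ is the sum of the monomials $\lambda_j^2$, one has $\nabla f(\lambda)=2\lambda$, and among the multi-indices with $|\alpha|=2$ only $\alpha=2e_j$ yields a nonzero derivative, with $D^{2e_j}f/(2e_j)!=1$. Using $(b_k x)^{2e_j}=b_{jk}^2x^2$ and $\sum_j b_{jk}^2=\|b_k\|_2^2$, Lemma~\ref{lem:LinGen} then gives
\[
\mathcal{A}f(\lambda)=2\,\lambda^\top M(\lambda)\lambda+\sum_{k=1}^d\lambda_k\,\|b_k\|_2^2\int x^2\,\nu_k(\lambda,dx)-2\,\lambda^\top A\lambda_0 ,
\]
so that $\mathcal{A}f$ splits into a quadratic form governed by $M(\lambda)$ plus two terms that are affine in $\lambda$.

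Next I would bound each piece. For the quadratic term, Assumption~\ref{Ass:Lin} guarantees that $M(\lambda)$ is diagonalisable, that $M(\lambda)+M(\lambda)^\top$ has only negative eigenvalues, and that $C_1:=-\sup_{\lambda\ge\lambda_0}\gamma(\lambda)>0$; hence the first bullet of Lemma~\ref{lem:negMat} yields $\lambda^\top M(\lambda)\lambda\le -C_1\|\lambda\|_2^2$ on the state space, i.e. $2\lambda^\top M(\lambda)\lambda\le -2C_1 f(\lambda)$. For the middle term, boundedness of $\lambda\mapsto\int x^2\nu_k(\lambda,dx)$ (Assumption~\ref{Ass:Lin}) produces a finite constant $m_2$ with $\|b_k\|_2^2\int x^2\nu_k(\lambda,dx)\le m_2$ for all $k$ and all $\lambda$, so the sum is at most $m_2\sum_k\lambda_k\le m_2\sqrt{d}\,\|\lambda\|_2$; the last term is at most $2\|A\lambda_0\|_2\|\lambda\|_2$ by Cauchy--Schwarz. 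Writing $b:=m_2\sqrt d+2\|A\lambda_0\|_2$, we obtain $\mathcal{A}f(\lambda)\le -2C_1\|\lambda\|_2^2+b\|\lambda\|_2$, and completing the square ($-2C_1 r^2+br\le -C_1 r^2+b^2/(4C_1)$ for $r\ge0$) gives $\mathcal{A}f(\lambda)\le C_0-C_1 f(\lambda)$ with $C_0:=b^2/(4C_1)$. Since $f(\lambda)=\|\lambda\|_2^2$ is a monotone increasing, unbounded function of the Euclidean norm, this is precisely the hypothesis \eqref{gen:cond}, and Theorem~\ref{thm:MT} delivers the invariant measure and geometric ergodicity.

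Beyond the routine algebra, the point that needs care is that \eqref{gen:cond} should be applied to a function lying in $\mathcal{D}(\mathcal{A})$, whereas the integrability condition \eqref{def:phiCond} of Proposition~\ref{prop:Dom} for $f(\lambda)=\lambda^\top\lambda$ a priori involves second moments of $\lambda(s)$ that are not yet under control. I would resolve this in the usual Meyn--Tweedie manner: carry out the computation above for the process stopped at the exit time $\tau_R$ of a large ball (on which $f$ is bounded and, by the PDP regularity assumed earlier, the Dynkin formula is available), use the drift bound to get an estimate for $\E_\lambda[f(\lambda(t\wedge\tau_R))]$ that is uniform in $t$, and let $R\to\infty$ with Fatou's lemma to recover finiteness of the moments and hence $f\in\mathcal{D}(\mathcal{A})$. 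A second, minor, point is that Lemma~\ref{lem:negMat} is stated with a supremum over all of $\R^d$; here it suffices to invoke it on the state space $\{\lambda\ge\lambda_0\}$ of the intensity, which is exactly what Assumption~\ref{Ass:Lin} controls.
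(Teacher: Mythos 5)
Your proof is correct and follows essentially the same route as the paper: apply Lemma~\ref{lem:LinGen} to the quadratic $f$, bound the quadratic form via Lemma~\ref{lem:negMat} under Assumption~\ref{Ass:Lin}, and absorb the remaining affine-in-$\lambda$ terms into constants (you complete the square explicitly where the paper simply notes that the quadratic term dominates any linear one). Your additional remarks on verifying $f\in\mathcal{D}(\mathcal{A})$ via a localisation argument and on restricting the supremum to the state space $\{\lambda\ge\lambda_0\}$ are sound refinements of points the paper leaves implicit.
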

\begin{proof} 
Let $f(\lambda) = \|\lambda\|_2^2/2 = \lambda^\top \lambda/2$. An application of Lemma~\ref{lem:LinGen} with the functions $f_j(\lambda) = \lambda_j^2/2$, $j=1,\ldots,d$, and the linearity of the generator gives us that 
\begin{align*}
(\mathcal{A}f)(\lambda) &= \lambda^\top M(\lambda) \lambda + \lambda^\top (w(\lambda) - A\lambda_0),
\end{align*}
where $w_k(\lambda) = f(b_k) \int x^2 \nu_k(\lambda,dx)$, $k=1,\ldots,d$. By Lemma~\ref{lem:negMat} it holds that there exists a constant $C > 0$ such that $\lambda^\top M(\lambda) \lambda \leq -C \|\lambda \|_2^2$. The proof is concluded by noting that
$$
(\mathcal{A}f)(\lambda) \leq -C \|\lambda \|_2^2 + \lambda^\top (w(\lambda) - A\lambda_0) \leq - C_1 f(\lambda) + C_2
$$
where $C_1 > 0$ and $C_2 \in \R$ are constants, since the quadratic term $\|\lambda \|_2^2$, grows faster than any linear term $\lambda^\top v$, $v \in \R^d$, and thus one can  bound $(\mathcal{A}f)(\lambda)$ with $C_2$ in a compact subset of $\R^d$.
\end{proof}

According to the preceding proposition the stability of the linear model is determined by the matrix $M(\lambda)$ of Assumption \ref{Ass:Lin}, and the jump-size moments. In what follows, we take a closer look at how this matrix determines the stability of the model in terms of norms on $\R^d$.

\begin{prop}\label{prop:FiniteNorm}
Suppose Assumption~\ref{Ass:Lin} holds, and that $\| \cdot \|$ is a norm on $\R^d$. Then  
$$
\E_\lambda[\|\lambda(t)\|] < C\left(\frac{C_2}{C_1} + \left(\lambda - \frac{C_2}{C_1}\right)\e^{-C_1t}\right)^{1/2}
$$ 
holds for all $t \geq 0$, where $C, C_1, C_2 > 0$ are constants, $C_1=-\sup_\lambda\gamma_1(\lambda)$, and $\gamma_1(\lambda)$ is the largest negative eigenvalue of $M(\lambda) + M(\lambda)^\top$.
\end{prop}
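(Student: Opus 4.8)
The plan is to feed the Foster--Lyapunov drift inequality already obtained in the proof of Proposition~\ref{prop:stable} into Dynkin's formula along a localizing sequence of stopping times, thereby producing an explicit bound on $\E_\lambda[\lambda(t)^\top\lambda(t)]$, and then to pass from the Euclidean norm to an arbitrary norm via norm equivalence and Jensen's inequality. Concretely, I would first record the drift inequality in the sharp form $\mathcal Af(\lambda)\le C_2-C_1 f(\lambda)$ with $f(\lambda)=\lambda^\top\lambda$ and $C_1=-\sup_\lambda\gamma_1(\lambda)>0$. By Lemma~\ref{lem:LinGen} and linearity of $\mathcal A$ one has $\mathcal Af(\lambda)=2\lambda^\top M(\lambda)\lambda+\lambda^\top v(\lambda)$ for a vector $v(\lambda)$ that is bounded under Assumption~\ref{Ass:Lin} (it is built from the second jump moments $\int x^2\nu_k(\lambda,dx)$, which are bounded, together with the fixed vector $A\lambda_0$); Lemma~\ref{lem:negMat} gives $\lambda^\top M(\lambda)\lambda\le-C_1\|\lambda\|_2^2$, so with $W:=\sup_\lambda\|v(\lambda)\|_2<\infty$ the remainder $|\lambda^\top v(\lambda)|\le W\|\lambda\|_2$ is dominated by $C_1\|\lambda\|_2^2$ outside a compact set. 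Young's inequality then absorbs it into a fraction of $-2C_1\|\lambda\|_2^2$, leaving the coefficient of $f$ equal to $-C_1$ and producing a finite constant $C_2>0$.

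Next I would convert this into the announced transient bound. Since the intensity process is non-explosive by assumption, the hitting times $\tau_n:=\inf\{t\ge0:\|\lambda(t)\|_2\ge n\}$ increase to $+\infty$ a.s. On the random interval $[0,t\wedge\tau_n]$ the local martingale from the proof of Proposition~\ref{prop:Dom} is a true martingale, so Dynkin's formula applies to the time-dependent function $(s,\lambda)\mapsto\e^{C_1 s}f(\lambda)$ (adding $\partial_s$ to the generator as in the remark following Proposition~\ref{prop:Dom}); since $\mathcal A(\e^{C_1\cdot}f)(\lambda)=\e^{C_1 s}\bigl(C_1 f(\lambda)+\mathcal Af(\lambda)\bigr)\le C_2\e^{C_1 s}$, this gives
\[
\E_\lambda\bigl[\e^{C_1(t\wedge\tau_n)}f(\lambda(t\wedge\tau_n))\bigr]\le f(\lambda)+\tfrac{C_2}{C_1}\bigl(\e^{C_1 t}-1\bigr).
\]
The right-hand side does not depend on $n$, and $\e^{C_1(t\wedge\tau_n)}f(\lambda(t\wedge\tau_n))\to\e^{C_1 t}f(\lambda(t))$ a.s., so Fatou's lemma yields
\[
\E_\lambda[f(\lambda(t))]\le\e^{-C_1 t}f(\lambda)+\tfrac{C_2}{C_1}\bigl(1-\e^{-C_1 t}\bigr)=\tfrac{C_2}{C_1}+\Bigl(f(\lambda)-\tfrac{C_2}{C_1}\Bigr)\e^{-C_1 t},
\]
which is the quantity written as $\tfrac{C_2}{C_1}+(\lambda-\tfrac{C_2}{C_1})\e^{-C_1 t}$ in the statement, with $\lambda$ read as $f(\lambda)=\lambda^\top\lambda$. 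Finally, all norms on $\R^d$ are equivalent, so there is $c>0$ with $\|x\|\le c\|x\|_2$ for all $x\in\R^d$; combining this with Jensen's inequality (concavity of $\sqrt{\,\cdot\,}$) gives
\[
\E_\lambda[\|\lambda(t)\|]\le c\,\E_\lambda[\|\lambda(t)\|_2]=c\,\E_\lambda\bigl[\sqrt{f(\lambda(t))}\bigr]\le c\,\bigl(\E_\lambda[f(\lambda(t))]\bigr)^{1/2}\le c\Bigl(\tfrac{C_2}{C_1}+\bigl(f(\lambda)-\tfrac{C_2}{C_1}\bigr)\e^{-C_1 t}\Bigr)^{1/2},
\]
which is the claim with $C:=c$ (taking $C$ marginally larger if one insists on the strict inequality, since Jensen is strict unless $\|\lambda(t)\|_2$ is a.s.\ constant).

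I expect the only genuine obstacle to be the localization: $f(\lambda)=\lambda^\top\lambda$ is unbounded, and checking $f\in\mathcal D(\mathcal A)$ through Proposition~\ref{prop:Dom} would be circular, because condition~\eqref{def:phiCond} for this $f$ amounts to $\int_0^t\E_\lambda[f(\lambda(s))]\,ds<\infty$, i.e.\ essentially the estimate we are trying to prove. Working with the stopped processes $\lambda(\cdot\wedge\tau_n)$, where everything is bounded, and then letting $n\to\infty$ by Fatou's lemma sidesteps this cleanly. The remaining points---tracking constants so that the coefficient of $f$ is exactly $-\sup_\lambda\gamma_1(\lambda)$, and the norm-equivalence/Jensen step---are routine.
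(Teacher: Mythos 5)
Your proof is correct and follows essentially the same route as the paper's: the drift inequality $\mathcal{A}f(\lambda) \le C_2 - C_1 f(\lambda)$ for $f(\lambda)=\lambda^\top\lambda$ carried over from Proposition~\ref{prop:stable}, Dynkin's formula, a Gr\"onwall-type integration (you use the integrating factor $\e^{C_1 s}$ where the paper differentiates and invokes Gr\"onwall directly), and finally norm equivalence plus Jensen. The only substantive difference is that you justify applying Dynkin's formula to the unbounded $f$ via localization at the exit times $\tau_n$ and Fatou's lemma --- a genuine gap in the paper's own argument, which you correctly identify as otherwise circular through condition~\eqref{def:phiCond}.
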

\begin{proof}
Let $f(\lambda) = \|\lambda\|_2^2$, then using the same steps as in the proof of Proposition~\ref{prop:stable} it holds that $\mathcal{A}f(\lambda) \leq -C_1\|\lambda\|_2^2 + C_2$, where $C_1, C_2 > 0$ are constants, and according to Lemma \ref{lem:negMat}, $C_1 = -\sup_\lambda\gamma_1(\lambda)$, where  $\gamma_1(\lambda)$ is the largest negative eigenvalue of $M(\lambda) + M(\lambda)^\top$. Hence, by Dynkin's lemma it holds that
 	\begin{align*}
		\E_\lambda[\|\lambda(t)\|_2^2]  &\leq \|\lambda(0)\|_2^2 + \int_0^t (-C_1\E_\lambda[\|\lambda(s)\|_2^2] + C_2)ds.
	\end{align*}
	Thus, if $z(t) := \E_\lambda[\|\lambda(t)\|_2^2] - C_2/C_1$, it follows by differentiating both sides that $z'(t) \leq -C_1z(t)$, so it follows by Grönwall's inequality that $z(t) \leq z(0)\e^{-C_1t}$, from which it follows that 
	$$
	\E_\lambda[\|\lambda(t)\|_2^2] \leq \frac{C_2}{C_1} + \left(\lambda - \frac{C_2}{C_1}\right)\e^{-C_1 t}
	$$
	for all $t \geq 0$. Now recall that all norms on finite-dimensional vector spaces are equivalent. Therefore by Jensen's (or the Cauchy-Schwarz) inequality and the equivalence of all norms on $\R^d$ (since it is finite dimensional), it follows that there exists a $C > 0$ such that
	$$
	\left(\E_\lambda[\|\lambda(t)\|]\right)^2 \leq \E_\lambda[\|\lambda(t)\|^2] \leq C\E_\lambda[\|\lambda(t)\|_2^2],
	$$
	where $\|\cdot\|$ is an arbitrary norm on $\R^d$. The result follows.
\end{proof}
Note that in Proposition \ref{prop:FiniteNorm} the value of the largest negative eigenvalue, $C_1 = -\sup_\lambda\gamma_1(\lambda)$, together with the inital value $\lambda(0)=\lambda$, determines how far on average the intensity can drift away from its stationary mean, via the $1/C_1$ terms, and the speed at which it mean-reverts,  by means of the $\e^{-C_1t}$ term. If $C_1$ is close to zero, then the intensity may drift far away from its mean value, whereas if $C_1$ is much larger than zero, then the mean reversion is much quicker. In both cases however, Proposition \ref{prop:FiniteNorm} states that the function $t \mapsto \E_\lambda[\|\lambda(t)\|]$ is bounded.


\subsection{Homogeneous jump-size distributions}

In this subsection we focus our attention on the case when the jump-size distribution of the linear model is homogeneous with respect to the intensity, i.e. when $\nu_k(\lambda,dx) = \nu_k(dx)$ holds for all $k=1,\ldots,d$. We derive formulas for the first two moments of $\lambda(t)$ in this case.

\begin{prop}\label{prop:Lin1mom}
Suppose that $\nu_k(\lambda,dx) = \nu_k(dx)$ is constant with respect to $\lambda$ for all $k=1,\ldots,d$. Then, for any $t \geq 0$, and $\lambda \geq \lambda_0$ it holds that
$$
\E_\lambda[\lambda(t)] = M^{-1}A\lambda_0 + \e^{tM}(\lambda - M^{-1}A\lambda_0),
$$
where $M = A+BJ$, $J$ is a diagonal matrix with $\int x \nu_k(dx)$, $k=1,\ldots,d$ on the diagonal. If in particular $M+M^\top$ has negative eigenvalues, then
$$
\lim_{t \to \infty} \E_\lambda[\lambda(t)] = M^{-1}A\lambda_0.
$$
\end{prop}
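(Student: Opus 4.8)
The plan is to identify the action of the extended generator on the coordinate functions, insert this into Dynkin's formula to obtain a closed integral equation for $m(t):=\E_\lambda[\lambda(t)]$, solve the resulting linear ODE, and read off the limit from the spectrum of $M$. Concretely, I would first apply Lemma~\ref{lem:LinGen} with $f(\lambda)=\lambda_j$, so that $n=1$, $\alpha=e_j$, and the second sum (from $m=2$ to $n$) is absent. Because the jump-size distributions are homogeneous, $J(\lambda)\equiv J=\diag\big(\int x\,\nu_1(dx),\dots,\int x\,\nu_d(dx)\big)$ and hence $M(\lambda)\equiv M=A+BJ$ does not depend on $\lambda$, so the lemma gives $\mathcal{A}f(\lambda)=e_j^\top M\lambda-e_j^\top A\lambda_0=(M\lambda-A\lambda_0)_j$. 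Stacking the $d$ coordinates, the generator sends the identity map $\lambda\mapsto\lambda$ to $\lambda\mapsto M\lambda-A\lambda_0$.

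Before using Dynkin's formula I must check that each $f(\lambda)=\lambda_j$ lies in $\mathcal{D}(\mathcal{A})$. By Proposition~\ref{prop:Dom} (measurability being trivial here), and since $f(\lambda+b_k x)-f(\lambda)=b_{jk}x$, this reduces to the integrability $\E_\lambda\big[\int_0^t\lambda_k(s)\,ds\big]<\infty$ for $k=1,\dots,d$; under Assumption~\ref{Ass:Lin} this is supplied by Proposition~\ref{prop:FiniteNorm} together with the equivalence of norms, and more generally it follows from a routine localization at the exit times of balls $\{\|\lambda\|\le R\}$ combined with $\lambda(t)\ge 0$ and monotone convergence. Granting this, Dynkin's formula (with Fubini, justified by the same bound) yields, componentwise,
\[
m(t)=\lambda+\int_0^t\big(M m(s)-A\lambda_0\big)\,ds,
\]
which shows $m$ is continuous, hence $C^1$; differentiating gives the linear system $m'(t)=Mm(t)-A\lambda_0$ with $m(0)=\lambda$. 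Since $M$ is invertible (under Assumption~\ref{Ass:Lin} this holds because $x^\top(M+M^\top)x<0$ for $x\neq 0$ forces $Mx\neq 0$; in any case invertibility is implicit in the statement, which contains $M^{-1}A\lambda_0$), the unique equilibrium is $M^{-1}A\lambda_0$, and variation of constants, using $\int_0^t\e^{(t-s)M}\,ds=M^{-1}(\e^{tM}-I)$ and $M\e^{tM}=\e^{tM}M$, gives $m(t)=\e^{tM}\lambda-M^{-1}(\e^{tM}-I)A\lambda_0=M^{-1}A\lambda_0+\e^{tM}(\lambda-M^{-1}A\lambda_0)$, as claimed.

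For the final assertion, if $M+M^\top$ has negative eigenvalues it is negative definite, so every eigenvalue $\mu$ of $M$ satisfies $\Re\mu<0$: for an eigenvector $x$ with $Mx=\mu x$ one has $\Re\mu\,\|x\|^2=\Re(x^{*}Mx)=\tfrac12 x^{*}(M+M^\top)x<0$. Hence $\|\e^{tM}\|\to 0$ as $t\to\infty$, so $\e^{tM}(\lambda-M^{-1}A\lambda_0)\to 0$ and $\E_\lambda[\lambda(t)]\to M^{-1}A\lambda_0$. I expect the only genuinely delicate point to be the domain/integrability verification in the first half — establishing $\E_\lambda\big[\int_0^t\lambda_k(s)\,ds\big]<\infty$ before the finiteness of the mean is known — while everything downstream is elementary linear ODE theory and the standard fact that negative definiteness of $M+M^\top$ forces $M$ to be Hurwitz.
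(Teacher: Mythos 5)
Your proposal is correct and follows essentially the same route as the paper: apply Lemma~\ref{lem:LinGen} to the coordinate functions $f_j(\lambda)=\lambda_j$, feed the result into Dynkin's formula to obtain the linear ODE $m'(t)=Mm(t)-A\lambda_0$, and solve by variation of constants. The additional points you raise --- verifying that $\lambda_j\in\mathcal{D}(\mathcal{A})$, noting that negative definiteness of $M+M^\top$ makes $M$ invertible and Hurwitz --- are details the paper leaves implicit, and your treatment of them is sound.
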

\begin{proof}
Suppose $f_j(\lambda) = \lambda_j$, where $1 \leq j \leq d$, then it holds by Lemma~\ref{lem:LinGen} that
\begin{align*}
	\mathcal{A}f_j(\lambda) &= (M(\lambda)\lambda - A\lambda_0)_j.
\end{align*}
Hence if, we denote the vector of first moments by $y(t) = (y_1(t),\ldots,y_d(t))^\top$, where $y_j(t) = \E_\lambda[f_j(\lambda(t))] = \E_\lambda[\lambda_j(t)]$, then it follows by Dynkin's lemma that
\begin{align}
	y(t)  &= y(0) + \int_0^t (\E_\lambda[M(\lambda(s))\lambda(s)] - A\lambda_0)ds  \label{1momentODE}\\
	&= \lambda + \int_0^t (M y(s) - A\lambda_0)ds, \nonumber
\end{align}	
where $M(\lambda) = M$ since $\nu_k(\lambda,dx) = \nu_k(dx)$ for $k=1,\ldots,d$. It follows by differentiating both sides that $y'(t) = M y(t) - A\lambda_0$, so 
\begin{align*}
\E_\lambda[\lambda(t)] &= \e^{M t}\lambda - \int_0^t \e^{(t-s) M}A ds\lambda_0 \\
&= \e^{t M}\lambda - (\e^{t M} - I) M^{-1}A \lambda_0 \\
&= M^{-1}A\lambda_0 + \e^{t M}(\lambda - M^{-1}A\lambda_0).
\end{align*}
From which our result follows.
\end{proof}


Notice that in the simplified case of the current subsection, the matrix $M$ and its eigenvalues determine wether or not $\lambda(t)$ is stable. If the eigenvalues of $M+M^\top$ are all negative then $\lambda(t)$ is stable in the sense that it does not explode. If the eigenvalues of $M+M^\top$ are close to zero (but still negative) then $\lambda(t)$ may drift further from its long term mean value, than in the case when the eigenvalues are far below zero. Clearly, these observations are congruent with the results of the Proposition \ref{prop:FiniteNorm}, and the discussion that follows after it.

\begin{prop}
Suppose that $\nu_k(\lambda,dx) = \nu_k(dx)$ is constant with respect to $\lambda$ for all $k=1,\ldots,d$. Then, if $V(t) = \E_\lambda[\lambda(t)\lambda(t)^\top]$, for any $t \geq 0$, and $\lambda \geq \lambda_0$ is constant, it holds that
$$
 V(t) = \e^{t M}\left(\lambda\lambda^\top + \int_0^t \e^{-s M}F(s)\e^{-s M^\top} ds\right)\e^{t  M^\top},
$$
where $M = A+BJ$, $J$ is a diagonal matrix with $\int x \nu_k(dx)$, $k=1,\ldots,d$ on the diagonal, $y(t) = \E_\lambda[\lambda(t)]$,  
$$
F(t) = - y(t)\lambda_0^\top A^\top - A\lambda_0(y(t))^\top  +  B^\top \Xi_2(y(t))B,
$$ 
and $\Xi_2(y)$ is a diagonal matrix with $y_k\int x^2 \nu_k(dx)$, $k=1,\ldots,d$ on the diagonal.
\end{prop}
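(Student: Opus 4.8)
The plan is to reduce the claim to an explicitly solvable linear matrix ODE for $V(t)=\E_\lambda[\lambda(t)\lambda(t)^\top]$, driven by the first-moment vector $y(t)=\E_\lambda[\lambda(t)]$ which is already known from Proposition~\ref{prop:Lin1mom}. First I would apply Dynkin's formula to the quadratic test functions $f_{ij}(\lambda)=\lambda_i\lambda_j$ for $1\le i\le j\le d$ (the off-diagonal entries for $i>j$ then following from the symmetry of $V$). Because the jump-size laws are homogeneous we have $M(\lambda)\equiv M=A+BJ$ constant, so Lemma~\ref{lem:LinGen} applies with $n=2$, the sum over $m$ collapsing to the single term $m=2$; it writes $\mathcal{A}f_{ij}(\lambda)$ as the sum of a first-order term $(\nabla f_{ij}(\lambda))^\top M\lambda$, an affine correction $-(\nabla f_{ij}(\lambda))^\top A\lambda_0$, and a pure second-order jump term $\sum_{|\alpha|=2}\tfrac{D^\alpha f_{ij}(\lambda)}{\alpha!}(\xi^\alpha(\lambda))^\top\lambda$.

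Next I would translate these three contributions into matrix form after conditioning on $\lambda(s)$ and taking expectations. Using $\nabla f_{ij}(\lambda)=\lambda_je_i+\lambda_ie_j$, the first-order terms assemble (over all $i,j$, using $V=V^\top$) into $MV(s)+V(s)M^\top$; the affine terms assemble into $-A\lambda_0\,y(s)^\top-y(s)\lambda_0^\top A^\top$; and since for $|\alpha|=2$ one computes $\xi^\alpha_k(\lambda)=(b_k)_i(b_k)_j\int x^2\nu_k(dx)$ and $D^\alpha f_{ij}/\alpha!=1$, the second-order terms assemble into the $\Xi_2$-contribution in $F$, evaluated at $y(s)$ — here one uses that $\Xi_2(\cdot)$ depends linearly on its argument, so $\E_\lambda[\Xi_2(\lambda(s))]=\Xi_2(y(s))$, together with Proposition~\ref{prop:Lin1mom} for $y(s)$. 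Combining these and noting $V(0)=\lambda\lambda^\top$ (the initial value being deterministic), Dynkin's formula gives $V(t)=\lambda\lambda^\top+\int_0^t\bigl(MV(s)+V(s)M^\top+F(s)\bigr)\,ds$, equivalently $V'(t)=MV(t)+V(t)M^\top+F(t)$.

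Finally I would solve this Lyapunov-type ODE by the integrating factor $W(t):=\e^{-tM}V(t)\e^{-tM^\top}$. Since $M$ commutes with $\e^{\pm tM}$, differentiating and substituting the ODE makes the $MV+VM^\top$ terms cancel against the derivatives of the exponential factors, leaving $W'(t)=\e^{-tM}F(t)\e^{-tM^\top}$; integrating from $0$ with $W(0)=V(0)=\lambda\lambda^\top$ and conjugating back by $\e^{tM}$ yields exactly the stated formula.

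The step I expect to be the real obstacle — as opposed to the routine bookkeeping above — is justifying the use of Dynkin's formula: that each $f_{ij}\in\mathcal{D}(\mathcal{A})$ and that the resulting expectations are finite and regular enough to differentiate the integral equation. For quadratic $f_{ij}$, the difference $f_{ij}(\lambda+b_kx)-f_{ij}(\lambda)$ is affine in $\lambda$ with coefficients controlled by the first two jump moments, which are bounded under Assumption~\ref{Ass:Lin}, so the integrand in \eqref{def:phiCond} is dominated by a constant times $\lambda_k(s)+\|\lambda(s)\|_2^2$. Hence the finiteness of \eqref{def:phiCond}, of $V(t)$ itself, of the Fubini interchange moving $\E_\lambda$ inside the time integral, and (after a short bootstrap yielding continuity of $s\mapsto\E_\lambda[\mathcal{A}f_{ij}(\lambda(s))]$, which legitimises the differentiation) of the whole argument all follow from the quadratic moment bound $\E_\lambda[\|\lambda(s)\|_2^2]<\infty$ established in the proof of Proposition~\ref{prop:FiniteNorm}, together with Proposition~\ref{prop:Lin1mom}.
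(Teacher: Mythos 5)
Your proof follows essentially the same route as the paper's: Dynkin's formula applied to the quadratic test functions $f_{ij}(\lambda)=\lambda_i\lambda_j$ via Lemma~\ref{lem:LinGen}, assembly into the Lyapunov-type ODE $V'(t)=MV(t)+V(t)M^\top+F(t)$ with $V(0)=\lambda\lambda^\top$, and solution by the integrating factor $\e^{-tM}V(t)\e^{-tM^\top}$; your added care about membership of the $f_{ij}$ in $\mathcal{D}(\mathcal{A})$ via the quadratic moment bound is a welcome supplement that the paper omits. One small point: your (correct) computation of the second-order jump term under the column convention $b_k=Be_k$ yields $B\,\Xi_2(y(s))\,B^\top$ rather than the $B^\top\Xi_2(y(s))B$ appearing in the statement of $F$, a transposition discrepancy that originates in the paper itself and should be flagged rather than silently matched.
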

\begin{proof}
Let $f(\lambda) = \lambda_j\lambda_k$, where $1 \leq j,k \leq n$, then if $M_j = (m_{jk})_{k=1}^d$, is the $j$th line of the matrix $M$ for $j=1,\ldots,d$ (and $A_j$ is the $j$th line $A$), it follows by Lemma~\ref{lem:LinGen} that
\begin{align*}
	\mathcal{A}f(\lambda) &= \lambda_k M_j \lambda + \lambda_j M_k \lambda - \lambda_k A_j \lambda_0 - \lambda_j A_k \lambda_0 + b_j^\top \Xi_2(\lambda) b_k \\
	&= \sum_{i=1}^d\left(\lambda_k(m_{ji} \lambda_i - a_{ji}\lambda_0^i)   + \lambda_j (m_{ki}\lambda_i - a_{ki}\lambda_0^i) + \lambda_ib_{ij}b_{ik} \int x^2 \nu_i(dx) \right).
\end{align*}
Now, if $y(t) = (\E_\lambda[\lambda_i(t)])_{i=1}^d$, it follows by Dynkin's lemma that
\begin{align*}
	\E_\lambda[\lambda_j(t)\lambda_k(t)] &= \lambda_j\lambda_k + \int_0^t \sum_{i=1}^d \E_\lambda[m_{ki}\lambda_j(s)\lambda_i(s) + m_{ji}\lambda_k(s)\lambda_i(s)]ds  \\
	& \ - \int_0^t \sum_{i=1}^d \left(a_{ki}\lambda_0^i y_j(s) + a_{ji}\lambda_0^i y_k(s) - y_i(s)b_{ij}b_{ik} J_i^2 \right)ds,
\end{align*}
where $J_i^2 = \int x^2 \nu_i(dx)$. Now, if $v_{jk}(t) = \E_\lambda[\lambda_j(t)\lambda_k(t)]$, then
\begin{align*}
	v_{jk}'(t) = \sum_{i=1}^d (m_{ki}v_{ji}(t) + m_{ji}v_{ki}(t)) - \sum_{i=1}^d (a_{ki}\lambda_0^i y_j(t) + a_{ji}\lambda_0^i y_k(t) - y_i(t)b_{ij}b_{ik}J_i^2),
\end{align*}
which in matrix form, $V(t) = (v_{jk}(t))_{j,k=1}^d$, takes the form
\begin{align*}
	V'(t) &= V(t)M^\top + MV(t) - y(t)\lambda_0^\top A^\top - A\lambda_0(y(t))^\top  +  B^\top \Xi_2(y(t))B.
\end{align*}
Hence, if $F(t) = - y(t)\lambda_0^\top A^\top - A\lambda_0(y(t))^\top  +  B^\top \Xi_2(y(t))B$, then $V'(t) = V(t)M^\top + MV(t) + F(t)$, and it holds that
\begin{align*}
	V(t) = \e^{tM}\left(\lambda\lambda^\top + \int_0^t \e^{-sM}F(s)\e^{-sM^\top} ds\right)\e^{tM^\top}.
\end{align*}

\end{proof}

Behr et al \cite{BBH19} study so-called differential Sylvester equations. The differential equation which $V(t)$ in the preceeding proof verifies is an example of a differential Sylvester formula. According to their spectral decompositions of a Sylvester operators (Lemma 3), if the matrix $M+M^\top$ has negative eigenvalues, then $\lim_{t \to \infty} V(t)$ is finite. We conclude this section with the following result on the covariance and autocovariance structure of the intensity process.


%

\begin{prop}
	Suppose that $\nu_k(\lambda,dx) = \nu_k(dx)$ is constant with respect to $\lambda$ for all $k=1,\ldots,d$. Given $\lambda \geq \lambda_0$, and $t,h \geq 0$, let $C_\lambda(t,h) = \E_\lambda[(\lambda(t+h)-\E_\lambda[\lambda(t+h)])(\lambda(t)-\E_\lambda[\lambda(t)])^\top]$ denote the covariance matrix of $\lambda(t+h)$ and $\lambda(t)$ under $\p_\lambda$. Then it holds that
	$$
	C_\lambda(t,h) = e^{hM}\left(V(t) - \e^{tM}\lambda y(t)^\top + (\e^{tM}-I)M^{-1}A\lambda_0y(t)^\top \right),
	$$
	where $M=A+BJ$, $J$ is a diagonal matrix with $\int x \nu_k(dx)$, $k=1,\ldots,d$ on the diagonal, $y(t) = \E_\lambda[\lambda(t)]$ and $V(t) = \E_\lambda[\lambda(t)\lambda(t)^\top]$. If $M+M^\top$ has negative eigenvalues, then
	$$
	\lim_{t\to \infty} C_\lambda(t,h) = \e^{hM}(\lim_{t\to \infty}V(t) - M^{-1}A\lambda_0(M^{-1}A\lambda_0)^\top ).
	$$
\end{prop}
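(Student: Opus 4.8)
The plan is to reduce the autocovariance matrix to the quantities $y(t) = \E_\lambda[\lambda(t)]$ and $V(t) = \E_\lambda[\lambda(t)\lambda(t)^\top]$ already computed in the two preceding propositions, handling the only new object, the cross moment $\E_\lambda[\lambda(t+h)\lambda(t)^\top]$, by the Markov property. First I would observe that in the homogeneous linear case the drift $\mu(\lambda) = A(\lambda-\lambda_0)$ and the jump mechanism do not depend on time, so $\lambda(t)$ is a time-homogeneous Markov process; hence for $h \geq 0$,
$$
\E_\lambda[\lambda(t+h) \mid \mathcal{F}_t] = \E_{\lambda(t)}[\lambda(h)] = M^{-1}A\lambda_0 + \e^{hM}\bigl(\lambda(t) - M^{-1}A\lambda_0\bigr),
$$
where the last equality is Proposition~\ref{prop:Lin1mom} applied with the (random) initial value $\lambda(t)$. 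By the tower property, and since $\lambda(t)^\top$ is $\mathcal{F}_t$-measurable, I would multiply on the right by $\lambda(t)^\top$ and take expectations, using $\E_\lambda[\lambda(t)^\top] = y(t)^\top$ and $\E_\lambda[\lambda(t)\lambda(t)^\top] = V(t)$, to get
$$
\E_\lambda[\lambda(t+h)\lambda(t)^\top] = \e^{hM}V(t) + \bigl(I - \e^{hM}\bigr)M^{-1}A\lambda_0\, y(t)^\top .
$$

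Next I would subtract $\E_\lambda[\lambda(t+h)]\,y(t)^\top = y(t+h)\,y(t)^\top$. Using the semigroup identity $y(t+h) = M^{-1}A\lambda_0 + \e^{hM}(y(t) - M^{-1}A\lambda_0)$, immediate from Proposition~\ref{prop:Lin1mom} and $\e^{hM}\e^{tM} = \e^{(t+h)M}$, the terms proportional to $M^{-1}A\lambda_0\, y(t)^\top$ cancel and one is left with the compact identity $C_\lambda(t,h) = \e^{hM}\bigl(V(t) - y(t)y(t)^\top\bigr)$. Substituting the explicit formula $y(t) = \e^{tM}\lambda + (I - \e^{tM})M^{-1}A\lambda_0$ into $y(t)y(t)^\top$ and regrouping then yields precisely
$$
C_\lambda(t,h) = \e^{hM}\Bigl(V(t) - \e^{tM}\lambda\, y(t)^\top + \bigl(\e^{tM} - I\bigr)M^{-1}A\lambda_0\, y(t)^\top\Bigr).
$$

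For the stationary limit I would recall that when all eigenvalues of $M + M^\top$ are negative one has $\|\e^{tM}\| \to 0$ as $t \to \infty$: differentiating $t \mapsto \|\e^{tM}v\|_2^2$, bounding the derivative by the largest eigenvalue of $M+M^\top$ times $\|\e^{tM}v\|_2^2$, and applying Gr\"onwall's inequality exactly as in the proof of Proposition~\ref{prop:FiniteNorm}, gives exponential decay. Hence $y(t) \to M^{-1}A\lambda_0$, while $V(t)$ has a finite limit by the remark following the preceding proposition, so passing to the limit term by term in the last display gives $\lim_{t\to\infty} C_\lambda(t,h) = \e^{hM}\bigl(\lim_{t\to\infty} V(t) - M^{-1}A\lambda_0(M^{-1}A\lambda_0)^\top\bigr)$.

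The only genuinely non-routine point is the conditioning step in the first paragraph: one must invoke time-homogeneity of $\lambda(t)$ in this subcase so that Proposition~\ref{prop:Lin1mom} legitimately applies with the random starting point $\lambda(t)$, use $\mathcal{F}_t$-measurability of $\lambda(t)^\top$ to pull it out of the conditional expectation, and note that finiteness of $V(t)$ makes all the second moments and conditional expectations involved well defined. Everything after that is bookkeeping with matrix exponentials.
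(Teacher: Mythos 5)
Your proof is correct and follows essentially the same route as the paper's: compute the cross moment $\E_\lambda[\lambda(t+h)\lambda(t)^\top]$ via the Markov property and Proposition~4, subtract $y(t+h)y(t)^\top$, and simplify with the semigroup identity. Your intermediate form $C_\lambda(t,h) = \e^{hM}(V(t)-y(t)y(t)^\top)$ is algebraically identical to the stated formula after substituting the explicit expression for $y(t)$, and your Gr\"onwall argument for $\e^{tM}\to 0$ fills in a detail the paper leaves implicit in the limit statement.
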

\begin{proof}
	According the Markov property, and Proposition \ref{prop:Lin1mom} it holds that
	\begin{align*}
		\E_\lambda[\lambda(t+h)\lambda(t)^\top] &= \E_\lambda[\E[\lambda(t+h) | \lambda(t)]\lambda(t)^\top] \\
		&= \E_\lambda[(M^{-1}A\lambda_0 + \e^{hM}(\lambda(t) - M^{-1}A\lambda_0))\lambda(t)^\top] \\
		&= M^{-1}A\lambda_0y(t)^\top + \e^{hM}(V(t) - M^{-1}A\lambda_0y(t)^\top),
	\end{align*}
	where $y(t) = \E_\lambda[\lambda(t)]$ and $V(t) = \E_\lambda[\lambda(t)\lambda(t)^\top]$, respectively. By refering to Propostion \ref{prop:Lin1mom} again it follows that if $C_\lambda(t,h)$ denotes the covariance matrix of $\lambda(t)$ and $\lambda(t+h)$ under the probability measure $\p_\lambda$, it holds that
	\begin{align*}
		C_\lambda(t,h) &= M^{-1}A\lambda_0y(t)^\top + \e^{hM}(V(t) - M^{-1}A\lambda_0y(t)^\top) \\
		& \ - (M^{-1}A\lambda_0 + \e^{(t+h)M}(\lambda - M^{-1}A\lambda_0))y(t)^\top \\
		&= \e^{hM}\left(V(t) - \e^{tM}\lambda y(t)^\top + (\e^{tM}-I)M^{-1}A\lambda_0y(t)^\top \right).
	\end{align*}
	This completes our proof.
\end{proof}

\section{Modelling joint movements of historical share prices}\label{sec:MLE} In this section we give an example of how a bivariate SDE-driven self-exciting model is fitted to data. Our starting point is the bivariate time-series which consists of the stock indices in New York and Tokyo respectively, i.e. the S\&P 500 and the Nikkei 225. The stock indices measure the performance of the stock markets in the United States and Japan, respectively. We employ a bivariate self-exciting model to predict how large movements in the respective stock indices influence the likelihood of a large movement in the other stock index. The plots are produced with R, and the maximum likelihood estimation is done with MATLAB.

The financial time series were downloaded from \url{finance.yahoo.com}, they are plotted in figure \ref{fig:ind}.
\begin{figure} 
\centering
\includegraphics[width=6.5in, height=2.5in]{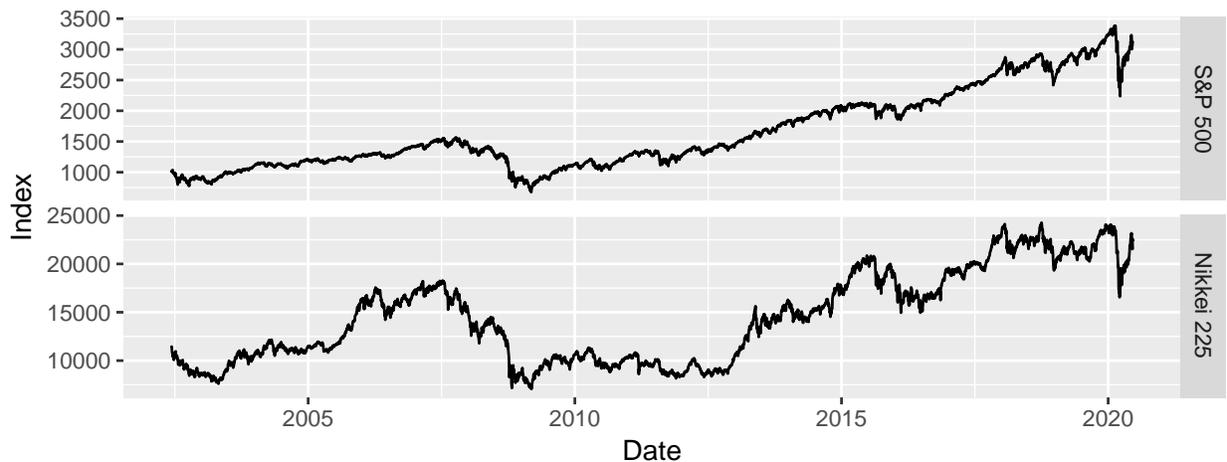}
\caption{Historical data of the S\&P 500 (upper panel) and the Nikkei 225 (lower panel) stock indices.}
\label{fig:ind}
\end{figure}
The bivariate point process data where each point is accompanied by a jump-size is then extracted from the time series. We extract the jump-times and jumps-sizes from the corresponding series of log-returns.
Basically, we say that a jump occurs in a specific component when the absolute value of a log-return is larger than a fixed threshold which we set equal to $0.025$. We use the same threshold for both series, thus the series with the higher volatility will produce more jumps than the series with the lower volatility. In our case this means that more jumps are extracted from the Nikkei 225 time series than the S\&P 500 time series.  
\begin{figure} 
\centering
\includegraphics[width=6.5in, height=2.5in]{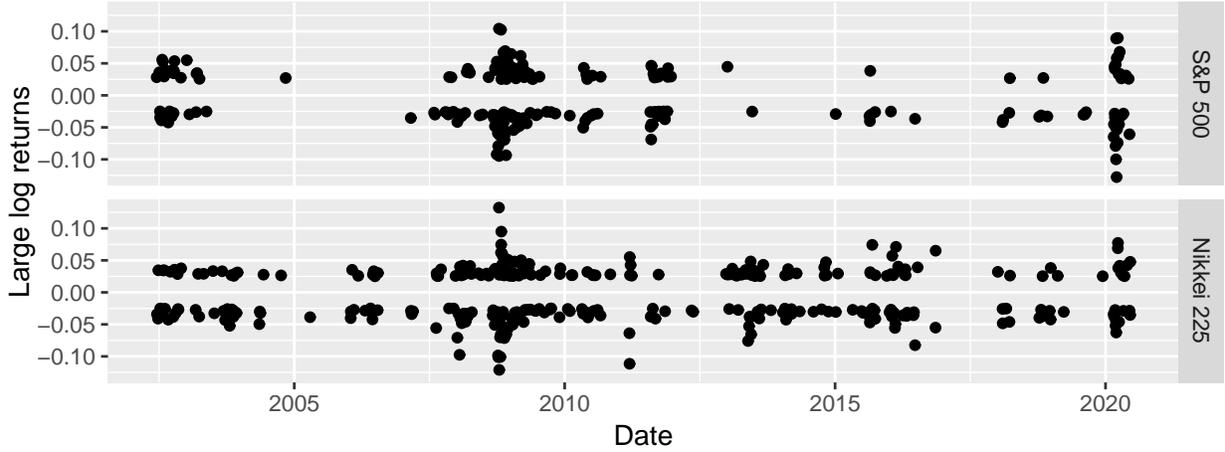}
\caption{Log returns the S\&P 500 (upper panel) and the Nikkei 225 (lower panel) stock indices, which are larger than $0.025$ in absolute value.}
\label{fig:lrLarge}
\end{figure}
Furthermore, not only an occurrence of a jump is recorded, but also its magnitude. Thus, jumps of large magnitude contribute more to the excitation of the intensity than smaller jumps. In figure \ref{fig:lrLarge} the extracted jumps of the respective log-return time-series are displayed, and in figure \ref{fig:lrLargeHists} positive and negative histograms of jumps which are larger than $0.025$ in absolute value are plotted. Notice that the largest positive and and negative log-returns in both indices appear during the financial crisis of 2008 and the corona virus pandemic in 2020, although spikes do clearly appear in other periods as well.

Based on the extracted data, we fit self-exciting models to three different time-series. First of all we have the series which consists of all of the jump-times we have extracted. Secondly, we consider only jump-times where log-returns are positive, and the third series contains jump-times with negative log-returns. In all cases the jump-size magnitudes equal the absolute value of the corresponding log-return, and thus the jump-sizes which feed into the maximum likelihood estimation are always positive. Note that in the maximum likelihood estimation we take the time and  magnitude of the jumps (i.e. the absolute value of the log return) into account via the step process, $U(t)$, which jumps whenever a jump occurs, and the jump size is given by the absolute value of the corresponding index log return.  

The log-likelihood of the model is well known (see Ogata~\cite{Og78} for asymptotic properties), if $\theta$ denotes the parameter vector to be found it is given by 
\begin{align*}
\log L_t(\theta) = \sum_{j=1}^2 \left(\sum_{T_k^{(j)} \leq t} \log \lambda_j (T_k^{(j)}) - \int_0^t \lambda_j(s) ds \right),
\end{align*}
where $\{T_k^{(j)}\}$, $j=1,2$ denote the jump times in the respective components.
\begin{figure} 
\centering
\includegraphics[width=6.5in, height=2.5in]{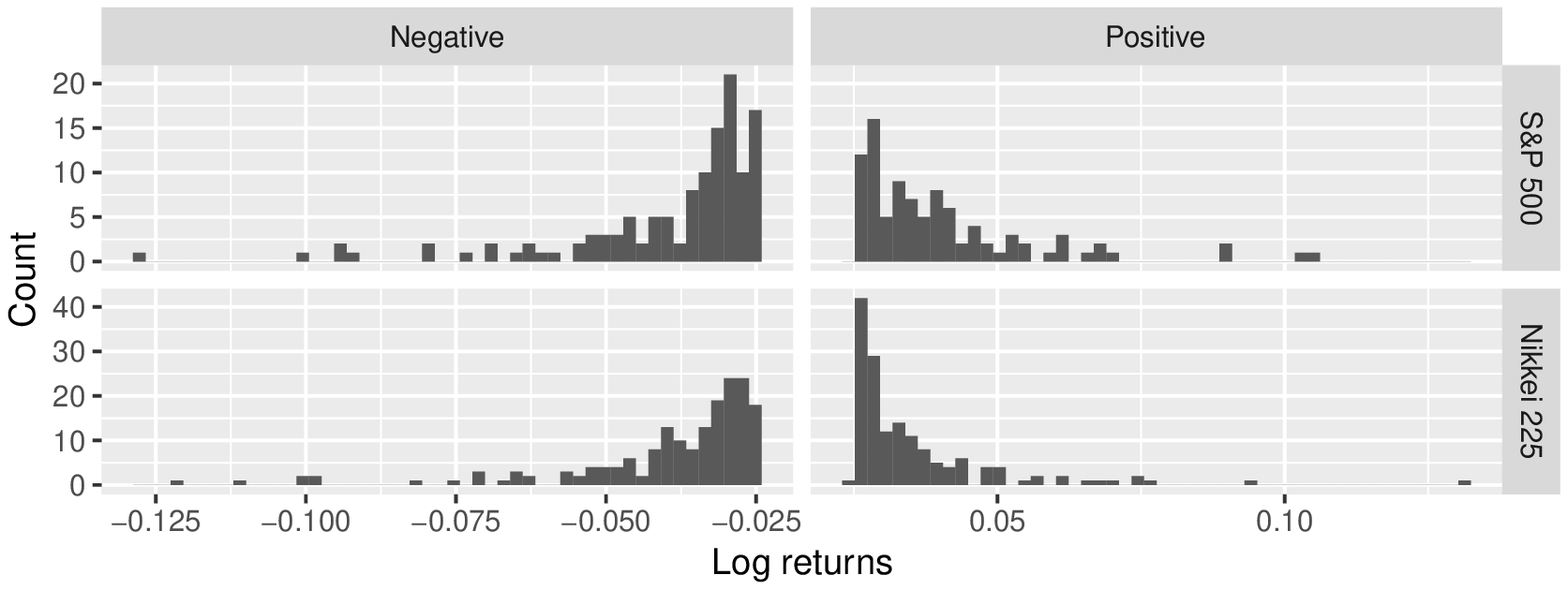}
\caption{Histograms of log returns the S\&P 500 (upper panel) and the Nikkei 225 (lower panel) stock indices, which are larger than $0.025$ in absolute value.}
\label{fig:lrLargeHists}
\end{figure}
Note that both the jump time and the magnitude (absolute value of log-return) of the corresponding jump feed into the maximum likelihood estimation via the specification of the intensity process.
\footnotesize
\begin{table}
\centering
\begin{tabular}{c|*{8}{c}}
Model & I & II & III & IV & V & VI & VII & VIII \\
\hline
 $\hat \lambda_{01}$ & 0.0074 & 0.0093 & 0.0053 & 0.0076 & 0.0076 & 0.0089 & 0.0082 & 0.0067  \\
 $\hat \lambda_{02}$ & 0.0216 & 0.0199 & 0.0211 & 0.0219 & 0.0198 & 0.0222 & 0.0178 & 0.0172 \\
 $\hat a_{11}$ & -0.0699 & -0.0672 & -0.0779 & -0.0723 & -0.0760 & -0.0643 & -0.5722 & -0.1190  \\
 $\hat a_{12}$ & -- & -0.0258 & -- & -0.0002 & -- & -0.0201 & -0.0194 & -- \\
 $\hat a_{21}$ & -- &  -0.0327 & -- & 0.0203 & -0.0337 & -- & -0.0308  & -0.0301 \\
 $\hat a_{22}$ & -0.0763 &  -0.0839 & -0.1019 & -0.0949 & -0.0722 & -0.0835 & -0.7631 & -0.1482 \\
 $\hat b_{11}$ &  1.5220 & 1.6117 & 1.5872 & 1.5736 & 1.6444 & 1.3913 & 1.8088 & 1.8064 \\
 $\hat b_{12}$ & -- & 0.3683 & 0.1331 & -- & -- & 0.3598 & 0.3055 & -- \\
 $\hat b_{21}$ & -- & 1.1952 & 0.8208 & -- & 1.1440 & -- & 1.4446 & 1.4729  \\
 $\hat b_{22}$ & 1.4908 & 1.3183 & 1.4234 & 1.5359 & 1.1566 & 1.6115 & 1.3476 & 1.2206  \\
 $\hat c$ & -- & -- & -- & -- & -- & -- & 0.1074 & 2.1541  \\
 $\hat d_1$ & -- & -- & -- & -- & -- & -- & 0.5086 & 0.0542 \\
 $\hat d_2$ & -- & -- & -- & -- & -- & -- & 0.6854 & 0.0813  \\
 \hline
 $LL$ & -1690 & -1667 & -1676 & -1687 & -1675 & -1682 & -1662 & -1669 \\
\hline
$\hat \lambda_{01}^+$ &  0.0038 &  0.0044 &  0.0033 & 0.0040 & 0.0039 & 0.0040 & 0.0043 & 0.0037 \\
$\hat \lambda_{02}^+$ & 0.0125  &  0.0108 &  0.0121 & 0.0125 & 0.0110 & 0.0125 & 0.0106 & 0.0107 \\
$\hat a_{11}^+$ &  -0.0528 &  -0.0520 &  -0.0547 &  -0.0544 &  -0.0557 & -0.0498 & -1.4179 & -0.1971 \\
$\hat a_{12}^+$ & -- & -0.0091 & -- & -0.0012 & -- & -0.0059 & -0.0084 & -- \\
$\hat a_{21}^+$ &  -- & -0.0306 & -- &  0.0274 & -0.0296 & -- & -0.0292 & -0.0285 \\
$\hat a_{22}^+$ & -0.0483 & -0.0412 & -0.0713 & -0.0751 & -0.0394 & -0.0490 & -1.0930 & -0.1518 \\
$\hat b_{11}^+$ &  1.1001 &  1.1477 &  1.1109 &  1.1479 &  1.1556 &  1.0476 & 1.2442 & 1.2211 \\
$\hat b_{12}^+$ &  -- &  0.1088 &  0.0392 &  -- &  -- &  0.0937 & 0.1083 & -- \\
$\hat b_{21}^+$ &  -- &  0.8521 &  0.6339 &  -- &  0.8241 &  -- & 0.9019 & 0.8738 \\
$\hat b_{22}^+$ &  0.9068 &  0.6514 &  0.9043 &  1.0162 &  0.6299 &  0.9177 & 0.6698 & 0.6336 \\
$\hat c^+$ & -- & -- & -- & -- & -- & -- & 0.0669 & 0.7371 \\
$\hat d_1^+$ & -- & -- & -- & -- & -- & -- & 1.3669 & 0.1447  \\
$\hat d_2^+$ & -- & -- & -- & -- & -- & -- & 1.0522 & 0.1139  \\
\hline
$LL^+$ & -953 & -942 & -945 & -950 & -943 & -953 & -942 & -943 \\
\hline  
$\hat \lambda_{01}^-$ & 0.0068 & 0.0054 & 0.0043 & 0.0063 & 0.0068 & 0.0072 & 0.0038 & 0.0061 \\
$\hat \lambda_{02}^-$ & 0.0186 & 0.0195 & 0.0290 & 0.0190 & 0.0255 & 0.0203 & 0.0190 & 0.0246 \\
$\hat a_{11}^-$ & -0.0599 & -0.0114 & -0.0811 & -0.0717 & -0.0599 & -0.0563 & -0.5389 & -4.0027  \\
$\hat a_{12}^-$ &  -- & -0.1233 & -- & 0.0068 & -- & -0.0421 & 0.1958 & -- \\
$\hat a_{21}^-$ &  -- &  0.0120 & -- & 0.0517 & 0.4209 & -- & 0.6158 & 0.4483 \\
$\hat a_{22}^-$ & -0.0658 & -0.1885 & -0.9272 & -0.1298 & -1.4496 & -0.0956 & -8.1084 & -9.4716 \\
$\hat b_{11}^-$ &  1.1920 & 1.6858 & 1.3095 & 1.3158 & 1.1924 & 1.0760 & 0.0001 & 1.3725 \\
$\hat b_{12}^-$ &  -- & 0.6901 & 0.3349 & -- & -- & 0.5814 & 0.7452 & -- \\
$\hat b_{21}^-$ &  -- & 1.9125 & 10.3821 & -- & 12.3842 & -- & 10.5548 & 12.5136  \\
$\hat b_{22}^-$ & 0.9497 & 1.0751 & 0.0000 & 1.1035 & 0.0000 & 1.2756 & 0.0001 & 0.0001  \\
$\hat c^-$ & -- & -- & -- & -- & -- & -- & 0.1400 & 0.0417 \\
$\hat d_1^-$ & -- & -- & -- & -- & -- & -- & 0.3233 & 3.9493  \\
$\hat d_2^-$ & -- & -- & -- & -- & -- & -- & 6.9854 & 8.0846  \\
\hline
$LL^-$ & -1168 & -1137 & -1141 & -1164 & -1140 & -1158 & -1124 & -1138  \\
\hline
\end{tabular}
\caption{Maximum likelihood estimates of the linear model and non-linear models, with corresponding log-likelihoods, where the entry -- means that the corresponding coefficient is set equal to $0$. Estimates corresponding to all jumps, positive jumps and negative jumps are displayed at the top, middle and bottom of the table, respectively.} \label{tab:MLELinear}
\end{table}
\normalsize
We fit a bivariate linear model \eqref{ex:linear} to the data and a non-linear generalization of the linear model. The model is fitted under the constraint that the stability conditions of the present paper is fulfilled. For the linear model, we assume that the dynamics are on the form $\lambda(t) = (\lambda_1(t), \lambda_2(t))^\top$, where $\lambda(t)$ is given by the linear model \eqref{ex:linear}, with
\begin{align}\label{def:LinParam}
\lambda_0 = \begin{pmatrix}
\lambda_{01} \\
\lambda_{02}
\end{pmatrix}, \ \
A = \begin{pmatrix}
a_{11} & a_{12} \\
a_{21} & a_{22}
\end{pmatrix}, \ \text{and }
B = \begin{pmatrix}
b_{11} & b_{12} \\
b_{21} & b_{22}
\end{pmatrix},
\end{align} 
and the intensity $\lambda_1(t)$ corresponds to the S\&P, while $\lambda_2(t)$ is the Nikkei intensity. The non-linear model has an intensity on the form 
\begin{equation*}
d\lambda(t) = (A + D\exp(-c\|\lambda(t)\|_2^2))(\lambda(t) - \lambda_0)dt + BdU(t),
\end{equation*}
where $\|x\|_2^2 = x^\top x$, for any $x \in \R^2$ denotes the Euclidian norm in $\R^2$, $\lambda_0$, $A$ and $B$ are given by \eqref{def:LinParam}, $c > 0$ is a constant and 
$$
D = \begin{pmatrix}
d_{1} & 0 \\
0 & d_{2}
\end{pmatrix}
$$
is a diagonal matrix. Notice that that the speed of mean reversion (i.e. the behaviour of the bivariate intensity between jumps) is allowed to change with the norm of $\lambda(t)$. Thus, the speed varies with the overall intensity level, there is a built in regime change in the model, the speed of mean reversion is different in times of high intensity than in times of low intensity. The non-linear model is an extension of the linear model, in the sense the linear model is recovered when $D=0$.

In table \ref{tab:MLELinear} the results of our maximum likelihood estimation are displayed in eight different cases, for all jumps, positive jumps and negative jumps, respectively. The first six cases correspond to a linear intensity which has been fitted to the data by forcing different off-diagonal parameters of $A$ and $B$ to be zero. That is, we fit six distinct linear models to the data, where the models differ on which off-diagonal elements are non-zero. 

From inspecting the parameter estimates and the corresponding log-likelihoods we see that out of the first six linear models, model II produces the highest log-likelihood in all cases. Model II sets no parameters equal to zero. Out of the remaining models, model V comes closest to model II in terms of the maximum likelihood, and model III produces the third best fit. A common feature of these three linear models is that the all allow component one to influence component two, i.e. they allow the S\&P 500 intensity to influence the Nikkei 225 intensity. While models I, IV and VI do not possess that feature. Model V has $A$ and $B$ as lower triangular matrices. A lower triangular $B$ means that a jump in the S\&P 500 index will cross-excite the jump intensity of the Nikkei 225 index but not vice versa, and a lower triangular $A$ means that the values of both intensities contribute to the mean reversion of the Nikkei intensity, whereas only the value of the S\&P index contributes to the mean reversion of the S\&P jump intensity. Model III on the other hand has a diagonal $A$ matrix, with diagonal values that dictate the speed of mean reversion for the respective jump intensities, and $B$ has non-zero entries, meaning that jumps contribute to both self- and cross-excitation of jump intensities between markets. 

It is interesting to note that, for the three different series, under models II and III it holds that $\hat b_{21} > \hat b_{12}$, so a jump in the S\&P 500 index typically causes more excitation in the Nikkei 225 index than vice versa, the difference is especially high in the case of negative jumps under model III. Which means that a negative return in the S\&P 500 index is much more likely to cause a negative return in the Nikkei 225 index than vice versa. In fact, we observe that the case of negative jumps this causality relationship is much stronger than in the case of positive jumps or all jumps. 


In models VII and VIII we have extended models II and V, respectively, to non-linear models, since models II and V had the highest log likelihood among the linear models. From inspecting the parameters of the non-linear models we see that their main feature is that the speed of mean reversion stronger when the intensities are high and slower when they are low. Note in particular that this effect is very strong in the case of negative jumps of the Nikkei 225 index, which is seen from the fact that the difference between $\hat a_{22}^-$ and $\hat d_2^-$ is quite large, by far the largest among the non-linear models. We also note that, the gain in switching to a non-linear model in terms of log-likelihoods is highest fo negative jumps. Thus, in that case we observe a regime change in times of crises, which means that the speeds of mean-reversion become higher than when the intensities increase.

\begin{figure} 
\centering
\includegraphics[width=6.5in, height=6in]{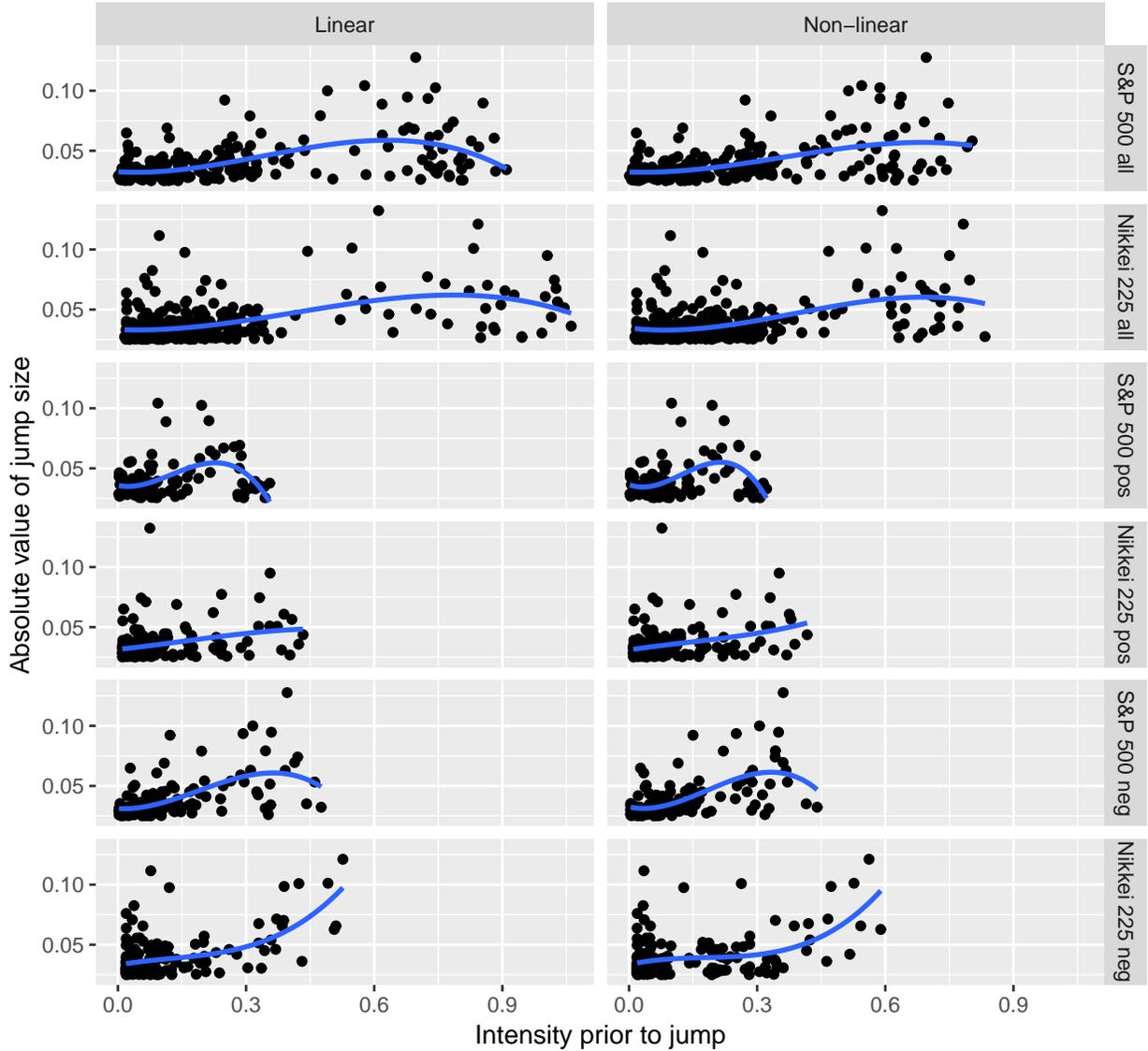}
\caption{Intensity of the respective stock indices S\&P 500 and the Nikkei 225, versus the absolute value of the corresponding jump sizes, when all jumps, positive jumps and negative jumps are considered, respectively. The parameters of the intensities are given by models II (linear model) and VIII (non-linear model) respectively in table \ref{tab:MLELinear}. B-splines have been fitted to the data. }
\label{fig:intensity}
\end{figure}

In figure \ref{fig:intensity} we have plotted intensity values immediately prior to jumps against absolute values of the corresponding log returns for the respective indices, where the parameters of the intensities are given by model II for the linear model and model VIII for the non-linear model. From the plots we see first of all that it is valid to assume that jump sizes depend on intensity values as we have assumed in our model, and secondly that the relationship between intensities and jump sizes depend on the market and sign of the jumps. When all jumps are considered the average jump sizes seem to flatten out or decrease for high intensity values. When only positive or negative jumps are considered for the S\&P 500 index, there is initially a positive relationship between the intensity values and jump-sizes, but the jump-sizes clearly become smaller for the highest intensity values. Note from figure \ref{fig:intensity_series} we see that the highest intensity values appear in clusters around times of crises. Thus, we can say that in times of crises the average jump-sizes become somewhat smaller than at the beginning of (what might become) a crisis, in other words there is an asymmetry of ascent and descent of clusters of large jumps. At the top of the crisis, since the model suggests that the S\&P 500 index is the leading index out the two, this effect contributes to, eventually, pushing the intensity back towards a calmer level.

On the other hand, when only positive or negative jumps are considered for the Nikkei 225 index, such an effect is less clear. Indeed, for positive jumps, figure \ref{fig:intensity} suggests there is a positive linear relationship between the intensity and jump-sizes, and for negative jumps, the data suggests a positive quadratic relationship. The reason why this does not cause an even higher intensity than observed is first of all that the speed of mean reversion for the jump intensities increases as the jump intensity increases, as we observed from the maximum-likelihood estimation of the non-linear Nikkei 225 component and secondly that jumps in the Nikkei 225 component does not raise the overall bivariate intensity as much as jumps in the S\&P 500 component. As previously noted, this shift in mean-reversion was strongest for the Nikkei 225 component when negative jumps were considered. 

\section{Conclusion}\label{sec:Conc} 
In the present paper we have defined a class of Markovian self- and cross-exciting processes. We have given stability conditions, and discussed the linear case in some detail. Finally we fitted our model to a bivariate time-series, which was extracted from large movements in the S\&P 500 and Nikkei 225 indices respectively. From our study we concluded that a non-linear variant of our model fitted the data best, and that the S\&P 500 index is leading in the sense that big movements there cause more cross-excitation in the Nikkei 225 index than vice versa.

\begin{figure} 
\centering
\includegraphics[width=6.5in, height=6in]{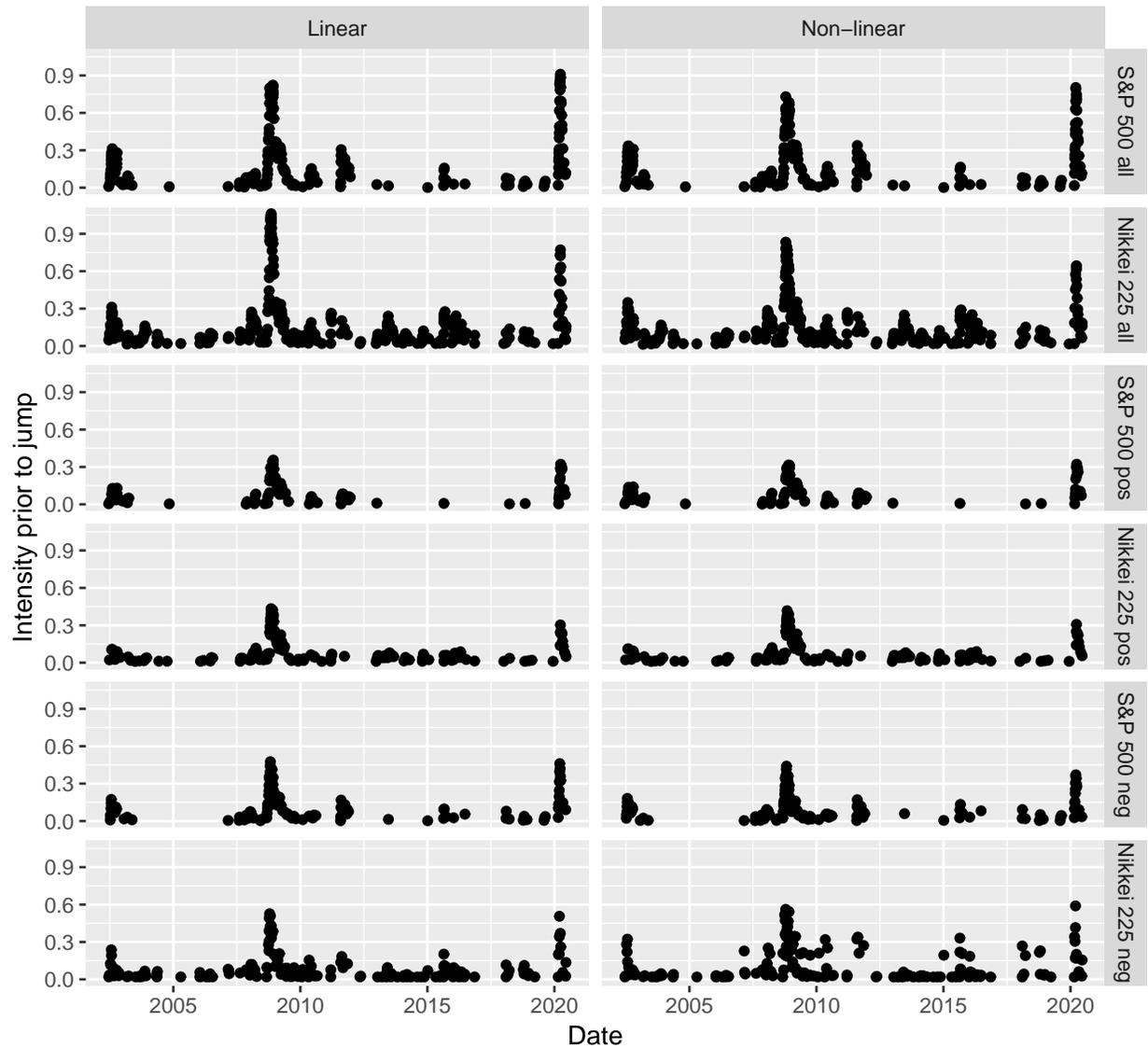}
\caption{Values of the Intensity time series of the respective stock indices S\&P 500 (upper panel) and the Nikkei 225 (lower panel) immediately before jumps, when all jumps, positive jumps and negative jumps are considered, respectively. The parameters of the intensities are given by models II (linear model) and VIII (non-linear model) respectively in table \ref{tab:MLELinear}.}
\label{fig:intensity_series}
\end{figure}

\printbibliography

\end{document}